\newtheorem{cor}{Corollary}
\newtheorem{thm}{Theorem}
\newtheorem{lem}{Lemma}
\newtheorem{proposition}{Proposition}
\newcommand{\R}{\mathbb{R}}
\newcommand{\beqn}{\begin{eqnarray*}}
\newcommand{\eeqn}{\end{eqnarray*}}
\newcommand{\gattract}{\mathbb{A}} 
\newcommand{\halmos}{\rule{1ex}{1.4ex}}
\newenvironment{myproof}{\noindent {\em Proof}.\ }{\hspace*{\fill}$\halmos$\medskip}
\newcommand{\epr}{\end{myproof}}
\newcommand{\bpr}{\begin{myproof}}
\title{\LARGE \bf
A remark on omega limit sets for non-expansive dynamics
}
\author{Alon Duvall$^{1}$ and Eduardo D. Sontag$^{2}$
\thanks{This work was partially supported by grants
AFOSR FA9550-21-1-0289 and NSF/DMS-2052455}%
\thanks{$^{1}$Northeastern University
        {\tt\small duvall.a@northeastern.edu}}%
\thanks{$^{2}$Northeastern University
        {\tt\small e.sontag@northeastern.edu, sontag@sontaglab.org}}%
}
\begin{document}

\maketitle
\thispagestyle{empty}

\begin{abstract}
In this paper, we study systems of time-invariant ordinary differential equations whose flows are non-expansive with respect to a norm, meaning that the distance between solutions may not increase. Since non-expansiveness (and contractivity) are norm-dependent notions, the topology of $\omega$-limit sets of solutions may depend on the norm. For example, and at least for systems defined by real-analytic vector fields, the only possible $\omega$-limit sets of systems that are non-expansive with respect to polyhedral norms (such as $\ell^p$ norms with $p =1$ or $p=\infty$) are equilibria. In contrast, for non-expansive systems with respect to Euclidean ($\ell^2$) norm, other limit sets may arise (such as multi-dimensional tori): for example linear harmonic oscillators are non-expansive (and even isometric) flows, yet have periodic orbits as $\omega$-limit sets. This paper shows that the Euclidean linear case is what can be expected in general: for flows that are contractive with respect to any strictly convex norm (such as $\ell^p$ for any $p\not=1,\infty$), and if there is at least one bounded solution, then the $\omega$-limit set of every trajectory is also an omega limit set of a linear time-invariant system.
\end{abstract}

\section{Introduction}

Contraction theory concerns dynamical systems which posses some kind of metric, typically arising from a norm, such that for every two trajectories, their distance is nonincreasing or even decreasing over time. The use of contraction analysis in control theory was pioneered by Slotine and collaborators~\cite{Loh_Slo_00}.
Expositions of contractivity in dynamical systems can be found for example in \cite{7039986}, \cite{Sontag2010}, and \cite{FB-CTDS}, which also show that in general non-Euclidean norms must be considered when analyzing nonlinear dynamics. Most work deals with cases when the distance between trajectories is strictly decreasing, though sometimes the situation arises where all we can say is that this distance is nonincreasing. Our paper is concerned with dynamical conclusions that one can draw when a dynamical system has a merely nonincreasing norm. 

Contraction theory has many connections to control theory and dynamical systems, as well as other fields. It has applications to data-driven control \cite{TSUKAMOTO2021135}, reaction diffusion systems \cite{9640589}, Hopfield neural networks \cite{9961864}, Riemannian manifolds \cite{SIMPSONPORCO201474}, network systems \cite{9403888}, and system safety \cite{jafarpour2023monotonicity}. Establishing contractivity of a system allows one to conclude many desirable stability properties. This makes contraction theory a useful tool in the context of certifying robustness guarantees.

Our main results stated informally are as follows. In the following suppose we are given a system that possesses at least one bounded trajectory. If a system is nonexpansive with respect to some norm, then solutions will converge to a global attractor set on which the system evolves isometrically. The structure of these omega limit sets is dependent on the particular choice of norm for which the system is nonexpansive.  If the norm is strictly convex then the equilibrium set is convex, and the system is equivalent to a linear system on the global attractor. In this case, we show that each omega limit set has the structure of an $n$-torus for some integer $n$. This differs from the situation of polyhedral norms for analytic vector fields. In this case the omega limit sets are always single points. In $\mathbb{R}^2$, weighted $l^2$ norms are the only norms for which a nonexpansive system (with respect to a weighted $l^2$ norm) does not necessarily converge to the equilibrium set. At the end we describe some examples.

\section{Background}

In the following we will describe norms and dynamical systems with special properties relating to the norm. We will assume that we have an autonomous system $\dot{x} = f(x)$ where $x \in \mathbb{R}^n$ and $f(\cdot )$ is $C^1$. We assume that we are given a particular norm $\|.\|$ on $\mathbb{R}^n$. We will define the forward time evolution of the system $\dot{x} = f(x)$ to be $\phi_t$. We assume that $\phi_t$ is defined for all $t \geq 0$. Given a vector field $f(x)$, we let the Jacobian evaluated at a point $x$ be $\mathcal{J}_f(x)$.

Now we will recall a few basic definitions that will be used in the sequel. A \textit{state space} $X$ is a forward invariant set for the system. An \textit{$\omega$ limit set} of a point $x$ is the set of points  $\cap_{t \geq 0} \overline{\cup_{s \geq t} \phi_s(x)} $. A system $\dot{x} = f(x)$ is \textit{nonexpansive} with respect to a norm $\| . \|$ if for all $t > 0$ and all $x,y$ in the system's state space we have that $\| \phi_t(x) - \phi_t(y) \| \leq \|x - y\|$.  
In other words, the flow maps are Lipschitz with Lipschitz constant $1$.
The \textit{global attractor} of a system $\dot{x} = f(x)$ (relative to the state space $X$) is the set ${\gattract}  = \cap_{t\geq 0 } \phi_t(X)$. An \textit{isometry} of a normed vector space $V$ is a mapping $F: V \rightarrow V$ such that $\|x-y\| = \|F(x) - F(y)\|$ for all $x,y \in V$. A \textit{discrete subgroup} of $GL_n$ is a group $G \subseteq GL_n$ such that for every $g \in G$ there exists an open ball $O_g$ such that $O_g \cap G = g$. For any positive integer $n$ we indicate the \textit{n-torus} by the $n$ product $S^1 \times S^1 \times... \times S^1 = (S^1)^n$ where $S^1$ is the circle.

\section{Some results on nonexpansivity}

While our results apply to non-compact state spaces, we can motivate working on compact state spaces by using Corollary~\ref{cor:equilibrium} below, which says that, under minimal assumptions, we can restrict analysis to a sufficiently large compact ball which contains the initial conditions of interest. The result will follow from Proposition~\ref{pro: unbounded or has fixed point}.
First we recall a well-known lemma that extends Brouwer's fixed point theorem to flows (Yorke’s fixed point theorem in \cite{FB-CTDS}); for completeness, we provide a self-contained proof.

\begin{lem}
\label{lem:fixed point}
    Suppose we have a $C^1$ vector field $f(x)$ and a compact and convex forward invariant set $X$. Then $f(x)$ has an equilibrium on $X$.
\end{lem}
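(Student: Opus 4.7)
The plan is to reduce this to Brouwer's fixed point theorem applied to the time-$t$ maps $\phi_t$, using the fact that they are continuous self-maps of the compact convex set $X$, and then take a diagonal limit as $t \to 0$ to extract an equilibrium.

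Concretely, first I would observe that for each positive integer $n$, the map $\phi_{1/n}:X\to X$ is well defined (by forward invariance of $X$) and continuous (by standard ODE theory for $C^1$ vector fields on a compact set). Since $X$ is compact and convex, Brouwer's fixed point theorem yields a point $x_n\in X$ with $\phi_{1/n}(x_n)=x_n$. By iteration, this also gives $\phi_{k/n}(x_n)=x_n$ for every nonnegative integer $k$. Then, by compactness of $X$, I would extract a subsequence (which I will still denote $x_n$) converging to some $x^*\in X$, and claim that $x^*$ is the desired equilibrium.

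To verify that $x^*$ is an equilibrium, I would show $\phi_t(x^*)=x^*$ for every $t\geq 0$, which is equivalent to $f(x^*)=0$. Given $t>0$, choose $k_n=\lfloor tn\rfloor$, so that $k_n/n\to t$. Then $\phi_{k_n/n}(x_n)=x_n$. Passing to the limit and using joint continuity of the flow $\phi_s(x)$ in $(s,x)$ on a compact set (again from standard $C^1$ ODE theory), the left side tends to $\phi_t(x^*)$ and the right side tends to $x^*$, so $\phi_t(x^*)=x^*$. Differentiating at $t=0$ then gives $f(x^*)=0$.

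The main subtlety, and the only nonroutine step, is justifying that $\phi_{k_n/n}(x_n)\to \phi_t(x^*)$: this needs joint continuity of the flow in both time and initial condition, uniformly on compact sets, which is standard but is the load-bearing ingredient. Everything else is direct application of Brouwer plus a compactness argument; the convexity of $X$ is used only to invoke Brouwer on the self-maps $\phi_{1/n}$.
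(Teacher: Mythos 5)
Your proof is correct, but it takes a genuinely different route from the paper's. Both arguments rest on applying Brouwer's fixed point theorem to the time-$t$ maps $\phi_t\colon X\to X$, but they diverge in how they pass from fixed points of the flow maps to an equilibrium of the vector field. The paper argues by contradiction: assuming $f$ has no zero on $X$, it uses the Flow-box Theorem to find, around each point, a neighborhood and a time threshold below which no point of that neighborhood is fixed, then uses compactness of $X$ to get a uniform threshold $t_f$ so that $\phi_t$ is fixed-point-free for $0<t<t_f$, contradicting Brouwer. You instead argue directly: take Brouwer fixed points $x_n$ of $\phi_{1/n}$, pass to a convergent subsequence $x_n\to x^*$, and use $\phi_{k/n}(x_n)=x_n$ together with joint continuity of $(s,x)\mapsto\phi_s(x)$ to conclude $\phi_t(x^*)=x^*$ for all $t\ge 0$, hence $f(x^*)=0$. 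Your approach trades the Flow-box Theorem and the covering argument for joint continuity of the flow plus a compactness extraction; it is constructive in spirit (it exhibits the equilibrium as a limit of approximate periodic points) and arguably more elementary, since joint continuity of the flow is more routine than the flow-box straightening. The one load-bearing step you correctly identify --- that $\phi_{k_n/n}(x_n)\to\phi_t(x^*)$ when $k_n/n\to t$ and $x_n\to x^*$ --- does indeed follow from standard continuous dependence on initial conditions and time for $C^1$ fields on the compact invariant set $X$, so there is no gap.
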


\begin{proof}
    Suppose $f(x)$ did not have an equilibrium on $X$. Then for any point $p$ there exists a time $t_p$ such that for $t < t_p$ that $\phi_{t}(p) \neq p$, i.e., $p$ is taken to a different point. In fact, this $t_p$ also works for all points in a neighborhood of $p$ (this can be seen via the Flow-box Theorem). Cover $X$ with all such neighborhoods. Since $X$ is compact we can then pick finitely many of these neighborhoods to cover $x$. Then we can take $t_f$ to be the minimum of all the times corresponding to these neighborhoods. Thus for $t < t_f$ we have that $\phi_{t}$ has no fixed points, contradicting Brouwer's fixed point theorem. Thus $f$ must have a fixed point.
\end{proof}

The following result, and the main ideas of its proof, are given as Theorem 19 in \cite{9403888}. We provide a streamlined proof for completeness.

\begin{proposition}\label{pro: unbounded or has fixed point}
    Suppose we have a nonexpansive time-invariant system $\dot{x} = f(x)$. Then exactly one of the following two conditions hold: 
    \begin{enumerate}
        \item Every trajectory of the system is unbounded.
        \item The system has an equilibrium point $x^*$ (that is $f(x^*) = 0$), and every trajectory is bounded.
    \end{enumerate}

    \begin{proof}
        First assume we have a bounded trajectory with initial point $x$. Thus we can consider $\omega(x)$, the omega limit set of $x$, which is a nonempty backward and forward invariant compact set for the system. For arbitrary $\epsilon > 0$ define $B_{\epsilon}(p) = \{y \in \mathbb{R}^n | \|y - p \| \leq \epsilon \}$. Consider $C_{\epsilon} = \cap_{p \in \omega(x)} B_{\epsilon}(p)$. Note that $C_{\epsilon}$ is convex and compact, since it is the intersection of convex and compact sets. If $y \in C_{\epsilon}$ then we must have for all $p \in \omega(x)$ that $\|y - p\| \leq \epsilon$, due to the definition of $C_{\epsilon}$. Fix an arbitrary $t \geq 0$ and $p \in \omega(x)$. Since $\omega(x)$ is backward invariant there exists $p' = \phi_{-t}(p) \in \omega(x)$ such that $\phi_t(p') = p$. Since the system is nonexpansive we must have that 
        \[
         \|\phi_t(y) - \phi_t(p') \|  \leq \|y - p' \| \leq \epsilon.
        \]
        From this we have that
        \[  \| \phi_t(y) - p \| =\|\phi_t(y) - \phi_t(p') \| \leq \epsilon. 
        \]
        
        Since $p$ was arbitrary, we must have that $\phi_t(y) \in C_{\epsilon}$ for all $t\geq 0$, and so $C_{\epsilon}$ is forward invariant for all $\epsilon \geq 0$ (if it is empty the statement is trivial).

        Now we can pick $\epsilon$ large enough such that $C_{\epsilon}$ is nonempty (which is clearly possible since $\omega(x)$ is compact). Then we can apply Lemma \ref{lem:fixed point} to conclude that $\dot{x} = f(x)$ has a fixed point in $C_{\epsilon}$.
    \end{proof}

\end{proposition}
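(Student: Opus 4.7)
The plan is to handle each direction separately. The two alternatives are clearly mutually exclusive, and one direction is essentially immediate from nonexpansiveness: if an equilibrium $x^*$ exists, then applying nonexpansiveness to any initial point $y$ against $x^*$ yields $\|\phi_t(y) - x^*\| = \|\phi_t(y) - \phi_t(x^*)\| \leq \|y - x^*\|$, so the orbit through $y$ stays in the closed ball of radius $\|y-x^*\|$ about $x^*$ and is bounded. So the substantive task is to prove that a single bounded trajectory forces the existence of an equilibrium.

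Given a bounded trajectory through some $x$, I would pass to the omega limit set $\omega(x)$, which is nonempty, compact, and both forward and backward invariant under $\phi$. The strategy is to manufacture a compact convex forward invariant set on which Lemma~\ref{lem:fixed point} can be invoked. The naive candidate $\conv(\omega(x))$ need not be forward invariant, since nonexpansiveness does not in general preserve convex combinations. Instead I would use a Chebyshev-center style construction: for $r > 0$ set $C_r = \bigcap_{p \in \omega(x)} \{y : \|y - p\| \leq r\}$. This is closed and convex as an intersection of closed balls, and bounded (hence compact) since it lies inside any single one of those balls. For $r$ at least the diameter of the compact set $\omega(x)$, the set $C_r$ is nonempty because it contains any fixed point of $\omega(x)$ itself.

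The heart of the argument is forward invariance of $C_r$. Pick $y \in C_r$, fix $t > 0$, and let $p \in \omega(x)$ be arbitrary. By backward invariance of $\omega(x)$ there exists $p' \in \omega(x)$ with $\phi_t(p') = p$; nonexpansiveness then gives $\|\phi_t(y) - p\| = \|\phi_t(y) - \phi_t(p')\| \leq \|y - p'\| \leq r$. Since $p$ was arbitrary, $\phi_t(y) \in C_r$. Applying Lemma~\ref{lem:fixed point} to $C_r$ produces an equilibrium inside $C_r$, and the easy direction above then forces every trajectory to be bounded.

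The main obstacle I expect is precisely this forward invariance step: a casual approach via $\conv(\omega(x))$ fails because nonexpansiveness controls distances to individual points rather than convex combinations. The intersection-of-balls construction converts forward invariance into a family of pointwise distance comparisons, each of which is exactly what nonexpansiveness provides, but only once one uses backward invariance of $\omega(x)$ to pair each target point $p$ with a preimage $p' \in \omega(x)$ under $\phi_t$.
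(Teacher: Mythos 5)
Your proposal is correct and follows essentially the same route as the paper: the intersection-of-balls set $C_r = \bigcap_{p\in\omega(x)}\{y:\|y-p\|\le r\}$, forward invariance via backward invariance of $\omega(x)$ paired with nonexpansiveness, and then Lemma~\ref{lem:fixed point}. You additionally spell out the easy converse direction (an equilibrium forces all trajectories to be bounded), which the paper leaves implicit; the only nit is that "any fixed point of $\omega(x)$" should read "any point of $\omega(x)$" in the nonemptiness argument.
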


 The following corollary follows immediately from Proposition \ref{pro: unbounded or has fixed point}. 

\begin{cor}\label{cor:equilibrium}
    Suppose we have a system that is nonexpansive with respect to a norm $\|.\|$ and that has a precompact trajectory. Then the system has at least one equilibrium point $p$, and every norm ball $B_{p,d} = \{ x \in \mathbb{R}^n | \|p - x \| \leq d \}$ is a compact forward invariant set.
\end{cor}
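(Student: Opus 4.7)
The plan is to deduce the corollary from Proposition~\ref{pro: unbounded or has fixed point} together with the defining nonexpansivity inequality, which is really the only non-trivial input needed. The statement has two parts (existence of an equilibrium, and forward invariance/compactness of every $\|\cdot\|$-ball around it), and each is short.

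For the first part, I would invoke Proposition~\ref{pro: unbounded or has fixed point} directly. Having a precompact trajectory means the orbit closure is compact, hence that trajectory is bounded, which rules out alternative (1) of the proposition. Therefore alternative (2) must hold, yielding an equilibrium $p$ with $f(p)=0$ (and incidentally that every trajectory is bounded, though this is not needed for what follows).

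For the second part, fix any $d \geq 0$ and consider $B_{p,d} = \{x \in \mathbb{R}^n : \|x - p\| \leq d\}$. Compactness follows because every norm on $\mathbb{R}^n$ is equivalent to the Euclidean norm, so $B_{p,d}$ is closed and bounded. For forward invariance, let $x \in B_{p,d}$ and $t \geq 0$. Since $p$ is an equilibrium, $\phi_t(p) = p$, and by nonexpansivity with respect to $\|\cdot\|$ we have
\[
\|\phi_t(x) - p\| \;=\; \|\phi_t(x) - \phi_t(p)\| \;\leq\; \|x - p\| \;\leq\; d,
\]
so $\phi_t(x) \in B_{p,d}$.

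There is no real obstacle here, since this is essentially an immediate corollary: the existence of the equilibrium is a direct case-split from the proposition, and forward invariance of the ball is the textbook observation that any fixed point of a nonexpansive map has closed metric balls around it preserved by the map, applied pointwise in $t$ to the flow $\phi_t$. The only thing to be slightly careful about is writing ``precompact trajectory'' precisely as implying boundedness, so that alternative (1) of Proposition~\ref{pro: unbounded or has fixed point} is excluded; after that the argument is three lines.
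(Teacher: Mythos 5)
Your argument is correct and is exactly the one the paper intends: the paper simply states that the corollary ``follows immediately'' from Proposition~\ref{pro: unbounded or has fixed point}, and your proposal supplies precisely the missing details (boundedness of the precompact trajectory excludes alternative (1), and nonexpansivity applied to the fixed point $p$ gives forward invariance of each ball).
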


Thus, for the remainder of this section, we will make the assumption, restricting if necessary to balls around an equilibrium, that \textit{all state spaces $X$ we consider are compact}.

\subsection{Compact state space}

\begin{lem}
\label{lem:time varying uniform convergence}
    Suppose we have a dynamical system $\dot{x} = f(x)$ with a $C^1$ vector field $f(x)$ and a compact forward invariant state space $X$. Then for any $\epsilon > 0$ there exists $T > 0$ such that for all $t,s > T$ and for all $x,y \in X$ we have that $| \|\phi_t(x) - \phi_t(y) \| - \| \phi_s(x) - \phi_s(y) \| | < \epsilon.$

\end{lem}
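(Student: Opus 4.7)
My plan is to view the quantity $V(t,x,y) := \|\phi_t(x)-\phi_t(y)\|$ as a family of functions on the compact set $X\times X$ indexed by $t$, and to show that $V(t,\cdot,\cdot)$ converges \emph{uniformly} as $t\to\infty$. The stated inequality then follows from the Cauchy property of a uniformly convergent family, since for $t,s>T$ we get $|V(t,x,y)-V(s,x,y)|\le|V(t,x,y)-V_\infty(x,y)|+|V_\infty(x,y)-V(s,x,y)|<\epsilon$. Implicit in the setup is that the system is nonexpansive with respect to $\|\cdot\|$ (this is the standing assumption of the section); under this assumption $V(\cdot,x,y)$ is monotonically nonincreasing in $t$, and bounded below by $0$, so the pointwise limit $V_\infty(x,y):=\lim_{t\to\infty}V(t,x,y)$ exists for every $(x,y)\in X\times X$.

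Next I would verify that the family $\{V(t,\cdot,\cdot)\}_{t\ge 0}$ is \emph{uniformly Lipschitz} on $X\times X$. Indeed, two applications of the reverse triangle inequality together with nonexpansivity give
\[
\bigl|V(t,x,y)-V(t,x',y')\bigr|\le\|\phi_t(x)-\phi_t(x')\|+\|\phi_t(y)-\phi_t(y')\|\le\|x-x'\|+\|y-y'\|,
\]
so each $V(t,\cdot,\cdot)$ is $1$-Lipschitz with respect to the sum norm on $X\times X$, with a Lipschitz constant that does not depend on $t$. As a pointwise limit of such functions, $V_\infty$ is itself $1$-Lipschitz, and in particular continuous on $X\times X$.

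At this point I would invoke Dini's theorem: a monotone pointwise limit of continuous functions on a compact space is uniform provided the limit is continuous. Applying this to the monotonically decreasing net $V(t,\cdot,\cdot)\searrow V_\infty$ on the compact set $X\times X$ yields, for any $\epsilon>0$, a time $T$ such that $0\le V(t,x,y)-V_\infty(x,y)<\epsilon/2$ for all $t>T$ and all $(x,y)\in X\times X$. The Cauchy estimate above then delivers the lemma.

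The one step that requires care, and which I expect is the main obstacle, is the continuity of the limit $V_\infty$; without it, Dini's theorem does not apply (and pointwise monotone convergence of continuous functions on a compact set can genuinely fail to be uniform). The uniform Lipschitz bound on the family $\{V(t,\cdot,\cdot)\}_{t\ge 0}$ handles this cleanly by transferring the modulus of continuity to the limit. If one preferred to avoid invoking Dini directly, the same uniform Lipschitz bound combined with pointwise convergence on a finite $\epsilon$-net of the compact set $X\times X$ gives a self-contained proof of the uniform convergence.
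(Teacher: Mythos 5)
Your proof is correct and follows essentially the same route as the paper's: monotone pointwise convergence of $\|\phi_t(x)-\phi_t(y)\|$ (using the nonexpansivity standing assumption), continuity of the limit via a triangle-inequality/Lipschitz estimate, and Dini's theorem to upgrade to uniform convergence, from which the Cauchy property follows. The only cosmetic differences are that you work directly with the real-parameter monotone family (net version of Dini) rather than the integer-time sequence, and you package the continuity step as a uniform $1$-Lipschitz bound, which is a slightly cleaner way to make the same argument.
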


\begin{proof}
    For the following let $n \in \mathbb{Z}$. Consider the sequence of functions $d_n: X \times X \rightarrow \mathbb{R}_{\geq 0}$ for $n \geq 0$ (here we give $X \times X$ the sup product metric) defined by
    \[
        d_n(x,y) = \|\phi_n(x) - \phi_n(y) \|.
    \]
    We have that $X \times X$ is a compact set. Note that $d_n$ satisfies the triangle inequality (since the norm satisfies it) and is symmetric. Due to the nonexpansivity of $\phi_n$ we have that $d_n(x,y)$ is monotonically decreasing in $n$ for any given $(x,y)$. Due to the nonnegativity of norms we also have that $d_n \geq 0$ for each $n$ and so $d_n$ is bounded below. Thus as $n \rightarrow \infty$ we have pointwise convergence to some function $d$. Note that $d_n$ is also a continuous function for each $n$. This is due to $\phi_n$ and the norm function both being continuous. 
    
    Lastly, we note that $d(x,y)$ is a continuous function on $X \times X$. Indeed, by the triangle inequality we have that for all $x,y,x',y' \in X $:
    \begin{align*}
        &d_n(x',y') - d_n(x',x) - d_n(y',y)  \leq d_n(x,y) \\ &d_n(x,y) \leq d_n(x',x) + d_n(x',y') + d_n(y',y).
    \end{align*}
    Suppose that $(x,y),(x',y')$ are close to each other in $X \times X$, i.e., $\max\{\|x - x'\|, \|y - y' \| \} < \epsilon$. Due to nonexpansivity, we have that $\max\{\|\phi_n(x) - \phi_n(x')\|, \|\phi_n(y) - \phi_n(y') \| \} < \epsilon$ for all $n \geq 0$. Thus we have that $0 \leq d_n(x,x')< \epsilon$ and $0 \leq d_n(y,y') < \epsilon$ for all $n \geq 0$. Using these bounds in the previous inequality, we now have that 
    \[
      d_n(x',y') - 2 \epsilon  < d_n(x,y) < d_n(x',y') + 2 \epsilon.
    \]
    Thus we have that $|d_n(x,y) - d_n(x',y')| < 2 \epsilon$ for all $n \geq 0$. Taking the limit in $n$, we see that $|d(x,y) - d(x',y')| \leq 2 \epsilon$. Thus the function $d$ is continuous.

    Now we can apply Dini's theorem and so we have that $d_n$ in fact converges uniformly to $d$. Thus there exists $N$ such that for $n \geq N$ we have that $d_n(x,y) - d(x,y) < \epsilon$ for all $x,y \in X$. Thus we also have that $d_n(x,y) - d_{n+k}(x,y) < \epsilon$ for all $n \geq N$ and all integers $k \geq 0$ (i.e., this sequence is a Cauchy sequence at each point $(x,y)$).
\end{proof}

Note this lemma says that points in the state space uniformly approach their minimum distance from each other. We then have the following:

\begin{cor}
\label{lem:isometry on attractor}
    Suppose we have a $C^1$ system $\dot{x} = f(x)$ with compact forward invariant state space $X$. Then for any real number $t \geq 0$ the time evolution operator $\phi_t$ is an isometry on the set ${\gattract}  = \cap_{t\geq 0 } \phi_t(X)$ (i.e., the global attractor of the system).
\end{cor}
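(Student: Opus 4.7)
The plan is to leverage Lemma~\ref{lem:time varying uniform convergence} to upgrade the nonexpansivity bound into an equality on $\gattract$. One direction is free: by nonexpansivity, for any $x,y \in \gattract$ and $t \geq 0$,
$$\|\phi_t(x) - \phi_t(y)\| \leq \|x - y\|,$$
so all the work lies in the reverse inequality.

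The structural fact that powers the argument is that, since $\gattract = \bigcap_{s \geq 0} \phi_s(X)$, each point of $\gattract$ admits a preimage in $X$ under $\phi_s$ for \emph{every} $s \geq 0$. I would fix $x, y \in \gattract$, $t \geq 0$, and an arbitrary $\epsilon > 0$, and apply Lemma~\ref{lem:time varying uniform convergence} to obtain a threshold $T > 0$ such that
$$\bigl|\|\phi_{\tau_1}(u) - \phi_{\tau_1}(v)\| - \|\phi_{\tau_2}(u) - \phi_{\tau_2}(v)\|\bigr| < \epsilon$$
holds for all $\tau_1, \tau_2 > T$ and \emph{all} $u,v \in X$. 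Next, choose some $s > T$ together with preimages $x_s, y_s \in X$ satisfying $\phi_s(x_s) = x$ and $\phi_s(y_s) = y$. Specializing the inequality above to $\tau_1 = s$, $\tau_2 = s+t$, $u = x_s$, $v = y_s$ and using the semigroup identity $\phi_{s+t} = \phi_t \circ \phi_s$ yields
$$\bigl|\|x - y\| - \|\phi_t(x) - \phi_t(y)\|\bigr| < \epsilon.$$
Since $\epsilon$ was arbitrary, equality follows; combined with forward invariance of $\gattract$ (which makes $\phi_t$ a self-map on $\gattract$), this is exactly the isometry property.

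The only subtle point is that the preimages $x_s, y_s$ vary with $s$, but this causes no trouble precisely because the convergence in Lemma~\ref{lem:time varying uniform convergence} is uniform over all of $X \times X$. Beyond invoking that lemma and the definition of $\gattract$, no further machinery is needed; there is no real obstacle, which is why the result is stated as a corollary.
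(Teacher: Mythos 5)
Your proof is correct and follows essentially the same route as the paper: pull $x,y\in{\gattract}$ back to preimages at a large time $s$, apply the uniform-convergence bound of Lemma~\ref{lem:time varying uniform convergence} with the semigroup identity, and let $\epsilon\to 0$. The only cosmetic difference is that the paper first establishes the equality for integer time steps and then extends to real $t$ by a monotonicity sandwich, whereas you invoke the lemma's real-time statement directly, which is equally valid.
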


\begin{proof}
   Defining $d_n(x,y)$ and $d(x,y)$ as in Lemma \ref{lem:time varying uniform convergence}, we know that for any $\epsilon>0$ there is an integer $N>0$ so that $d_n(x',y') - d_{n+k}(x',y') < \epsilon$ for all $x',y'\in {\gattract}$ and all $n>N$ and $k>0$. Pick now any $x,y\in {\gattract}$ and any two integers $n>0$ and $k>0$ such that $n > N$. Since ${\gattract} \subseteq \phi_n(X)$, we have that there exist $x',y'$ such that $\phi_n(x') = x$ and $\phi_n(y') = y$.  We have that
    \begin{align*}
    & \|x - y \| - \|\phi_k(x) - \phi_k(y) \|  \\
    &=  \|\phi_{n}(x') - \phi_{n}(y') \| - \|\phi_{k+n}(x') - \phi_{k+n}(y') \|  \\
    &= d_n(x',y') - d_{n+k}(x',y') < \epsilon.
    \end{align*}
    Since $\epsilon$ can be arbitrarily small, we have that  $ \|x - y \| - \|\phi_k(x) - \phi_k(y) \| =  0$, or $ \|\phi_k(x) - \phi_k(y) \| = \|x - y \|$. Since $k$ was arbitrary, this holds for all integers $k \geq 0$. Note that this implies, for example, for each $0 \leq t \leq 1$ that (by nonexpansivity) 
    \[
        \|\phi_0(x) - \phi_0(y) \| \geq \|\phi_t(x) - \phi_t(y) \| \geq \|\phi_1(x) - \phi_1(y) \|.
    \]
    Since the left and right terms are equal, all the inequalities are in fact equalities. The same argument can be applied to any positive real number $t$.
    
    Thus for $x,y \in {\gattract}$ and any real number $t \geq 0 $ we have that $ \|\phi_t(x) - \phi_t(y) \| = \|x - y \|$, and so the time evolution operator is an isometry on this set.
\end{proof}

Observe that ${\gattract}$ is nonempty, since it is an intersection of a decreasing family of compact sets.
A key property is that every trajectory converges to ${\gattract}$, as shown next.

\begin{lem}
    Every omega limit set is contained in ${\gattract}$.
\end{lem}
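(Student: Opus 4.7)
The plan is to show directly that any $q \in \omega(x)$ lies in $\phi_t(X)$ for every $t \geq 0$. So fix an arbitrary trajectory starting at $x \in X$, an arbitrary point $q \in \omega(x)$, and an arbitrary $t \geq 0$; the goal is to produce a $y \in X$ with $\phi_t(y) = q$.

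By definition of the omega limit set, I would pick a sequence of times $s_n \to \infty$ such that $\phi_{s_n}(x) \to q$. Throwing away finitely many terms, I can assume $s_n > t$ for every $n$, and then use the semigroup property to rewrite
\[
\phi_{s_n}(x) \;=\; \phi_t\bigl(\phi_{s_n - t}(x)\bigr).
\]
The points $\phi_{s_n - t}(x)$ all lie in the compact forward-invariant set $X$, so after passing to a subsequence they converge to some $y \in X$. By continuity of the flow map $\phi_t$ (which follows from $f \in C^1$), this yields
\[
q \;=\; \lim_{n \to \infty} \phi_t\bigl(\phi_{s_n - t}(x)\bigr) \;=\; \phi_t(y) \,\in\, \phi_t(X).
\]
Since $t \geq 0$ was arbitrary, $q \in \cap_{t \geq 0} \phi_t(X) = \mathbb{A}$, and since $q \in \omega(x)$ was arbitrary, $\omega(x) \subseteq \mathbb{A}$.

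There is essentially no obstacle here: the argument is the standard one that $\omega$-limit sets of trajectories are contained in the maximal invariant set, and the only structural facts being used are forward invariance and compactness of $X$ together with continuity of the flow. Non-expansivity plays no role at this step; it will matter only when one later uses Corollary~\ref{lem:isometry on attractor} to say that the flow acts isometrically on $\mathbb{A}$.
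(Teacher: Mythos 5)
Your proof is correct and follows essentially the same route as the paper's: both hinge on pulling a point of $\omega(x)$ back by time $t$ to a preimage in $X$, using the semigroup property. The only cosmetic difference is that the paper invokes the standard backward invariance of $\omega(x)$ to produce the preimage $\phi_{-t}(y)\in\omega(x)$, whereas you re-derive the existence of a preimage in $X$ directly via compactness, a convergent subsequence of $\phi_{s_n-t}(x)$, and continuity of $\phi_t$ --- a slightly more self-contained version of the same argument.
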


\begin{proof}
    Take an arbitrary $x \in X$. Since the state space $X$ is compact, the solution starting from $x$ has a nonempty compact, connected, and backward and forward invariant omega limit set $\omega(x)$, and the solution converges to it. Pick any $y \in \omega(x)$. Then for all $t>0$ we have that $\phi_{-t}(y) \in \omega(x) \subseteq X$ and so $y \in \phi_t(X)$. Thus $y \in \cap_{t >0} \phi_t(X) = {\gattract}$. Since $y$ was arbitary, this shows that $\omega(x) \subseteq {\gattract}$.
\end{proof}

One could also derive Corollary~\ref{lem:isometry on attractor} by appealing to a result from Freudenthal and Hurewicz \cite{Freudenthal1936} which showed that every nonexpansive map from a totally bounded metric space (for example, any compact space) onto itself must be an isometry; see also \cite{Ding2011}.

\subsection{Strictly convex norms}

Recall that a norm $\|.\|$ is \textit{strictly convex} if and only if whenever $x$ and $y$ are two distinct points with $\|x\| = r$ and $\|y\| = r$ for some $r > 0$, we have that for $0 <\alpha < 1$ then $\|\alpha x + (1-\alpha) y\| < r$. For the case where a given norm is strictly convex we have the following uniqueness lemma:

\begin{lem}
\label{lem:strictly convex point unique}
    Suppose we have a strictly convex norm $\| .\|$. Pick two points $x,y$ and any number $0\leq a < \|x-y\|$. Then the point $z$ that satisfies $\|x - z\| = a < \|x-y\|$ and $\|x-y\| = \|x - z\| + \|z - y\|$ exists and is unique.
\end{lem}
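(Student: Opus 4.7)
The plan is to separate existence and uniqueness. For existence, I would just exhibit the obvious candidate on the affine segment between $x$ and $y$: set
\[
z \;=\; x + \frac{a}{\|x-y\|}(y - x).
\]
By positive homogeneity of the norm, $\|x - z\| = a$ and $\|z - y\| = (1 - a/\|x-y\|)\|x-y\| = \|x-y\| - a$, so the two norms add up to $\|x-y\|$. (The case $a = 0$ just gives $z = x$.) Note this step uses nothing beyond homogeneity; strict convexity is not needed here.

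For uniqueness I would argue by contradiction and average. Assume $a > 0$ (else $z = x$ is forced) and suppose $z_1 \ne z_2$ both satisfy the two conditions. Then necessarily $\|x - z_i\| = a$ and $\|z_i - y\| = \|x-y\| - a$, and the latter quantity is strictly positive since $a < \|x-y\|$. Consider the midpoint $m = \tfrac{1}{2}(z_1 + z_2)$. The two unit vectors $(x - z_1)/a$ and $(x - z_2)/a$ are distinct and both of norm $1$, so strict convexity of $\|\cdot\|$ gives
\[
\left\| \frac{(x - z_1) + (x - z_2)}{2a} \right\| < 1,
\qquad\text{i.e.,}\qquad \|x - m\| < a.
\]
A plain triangle inequality (convexity of the norm ball around $y$) yields $\|m - y\| \leq \|x-y\| - a$. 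Adding the two and comparing with the triangle inequality $\|x-y\| \leq \|x - m\| + \|m - y\|$ produces $\|x-y\| < a + (\|x-y\| - a) = \|x-y\|$, a contradiction.

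I do not expect any real obstacle; the only thing to be careful about is that strict convexity requires the two averaged unit vectors to be distinct, which is exactly the hypothesis $z_1 \ne z_2$ together with $a > 0$, and that one must handle the degenerate case $a = 0$ separately so that $\|z_i - y\| > 0$ is genuinely positive on the other side. An alternative writeup would apply strict convexity on the $y$-side instead, or on both sides symmetrically; either version is equally short, but applying it on only one side keeps the bookkeeping cleanest.
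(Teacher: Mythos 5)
Your proof is correct and follows essentially the same route as the paper's: existence via the explicit point on the segment from $x$ to $y$, and uniqueness by averaging two putative solutions and deriving a violation of the triangle inequality. The only cosmetic difference is that you invoke strict convexity on the $x$-side alone and settle the $y$-side with the plain triangle inequality, whereas the paper applies strict convexity on both sides (with a general $\alpha$ rather than the midpoint); both work.
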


\begin{proof}
Note there exists such a point, since we can simply take $z = (1-\frac{a}{\|x-y\|}) x + \frac{a}{\|x-y\|} y$.

If there were two points $z$ and $z'$ with the claimed property, consider $x - z$ and $x - z'$. Pick any number $\alpha$ such that $0 < \alpha < 1$. Let $z'' = \alpha z + (1 - \alpha) z'$. Note we have that
\[
x - z'' = \alpha (x - z) + (1-\alpha) (x-z')
\]
and
\[y - z'' = \alpha (y - z) + (1-\alpha) (y-z') \,.
\]
By the triangle inequality we have that 
\[
\|x-z''\| + \|z'' -y\| \geq \|x-y\|.
\]
Note that $\|x - z \| = \|x - z' \| = a$ and $\|y - z\| = \| y - z'\| = \|x - y \| - a$. By strict convexity we have that
\[
\|x - z''\| = \| \alpha (x - z) + (1-\alpha) (x-z') \| < a
\]
and
\[
\|y - z''\| = \| \alpha (y - z) + (1-\alpha) (y-z') \| < \|x-y\| - a \,.
\]
This gives us 
\[
\|x-z''\| + \|z'' -y\| < \|x-y\|.
\]
This contradicts the triangle inequality, and thus the point is unique.
\end{proof}

Notice that Lemma~\ref{lem:strictly convex point unique} need not hold for non-strictly convex norms. For example, consider the $\ell^1$ norm and $x=(0,0)$, $y=(1,1)$. Then with $a=1/2$ we can pick $z_1=(0,1)$ and $z_2=(1,0)$ to satisfy the property that $\|x-y\| = 2 = 1+1 = \|x - z\| + \|z - y\|$.

From now on in this section, we assume that the norm being considered is strictly convex.

\begin{lem}
\label{lem:convex sums}
    For $x,y \in {\gattract}$, $t \geq 0$ and $1 \geq \lambda \geq 0$ we have that $\phi_t(\lambda x + (1 - \lambda) y) = \lambda \phi_t(x) + (1-\lambda) \phi_t(y)$
\end{lem}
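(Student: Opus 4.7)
The plan is to use strict convexity to pin down the image of the midpoint via Lemma~\ref{lem:strictly convex point unique}. Set $z := \lambda x + (1-\lambda) y$. Because $z$ lies on the straight segment from $x$ to $y$, a direct computation gives $\|x-z\| = (1-\lambda)\|x-y\|$ and $\|z-y\| = \lambda\|x-y\|$, so in particular
\[
\|x-z\| + \|z-y\| \;=\; \|x-y\|.
\]
(The edge cases $x=y$ or $\lambda \in\{0,1\}$ are trivial, so assume $x\neq y$ and $0 < \lambda < 1$.) The point $z$ lies in the compact convex state space $X$ (we have restricted to a ball around an equilibrium via Corollary~\ref{cor:equilibrium}), so $\phi_t(z)$ is well defined.

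Next I would apply $\phi_t$ and exploit that $\gattract$-points evolve isometrically. By nonexpansivity on $X$,
\[
\|\phi_t(x) - \phi_t(z)\| \le (1-\lambda)\|x-y\|,\qquad \|\phi_t(z) - \phi_t(y)\| \le \lambda\|x-y\|.
\]
On the other hand, since $x,y \in \gattract$, Corollary~\ref{lem:isometry on attractor} yields $\|\phi_t(x) - \phi_t(y)\| = \|x-y\|$. Combining this with the triangle inequality,
\[
\|x-y\| = \|\phi_t(x) - \phi_t(y)\| \le \|\phi_t(x) - \phi_t(z)\| + \|\phi_t(z) - \phi_t(y)\| \le \|x-y\|,
\]
so both nonexpansivity inequalities must be equalities, and the triangle inequality in the middle must be saturated as well.

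To conclude, I would invoke Lemma~\ref{lem:strictly convex point unique} on the pair $\phi_t(x),\phi_t(y)$ with $a := (1-\lambda)\|\phi_t(x) - \phi_t(y)\|$. The saturated equalities above say exactly that the point $w = \phi_t(z)$ satisfies $\|\phi_t(x) - w\| = a$ and $\|\phi_t(x) - \phi_t(y)\| = \|\phi_t(x) - w\| + \|w - \phi_t(y)\|$. Since the lemma guarantees a unique such point and the convex combination $\lambda\phi_t(x) + (1-\lambda)\phi_t(y)$ obviously has this property, uniqueness forces
\[
\phi_t(z) \;=\; \lambda\phi_t(x) + (1-\lambda)\phi_t(y),
\]
which is the claim. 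The only real obstacle is bookkeeping—carefully chaining nonexpansivity (which lives on all of $X$) with isometry (which only holds on $\gattract$) to saturate the triangle inequality, after which the strict-convexity uniqueness lemma does all the geometric work.
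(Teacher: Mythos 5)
Your proposal is correct and follows essentially the same route as the paper's proof: use nonexpansivity for the two legs $x\!-\!z$ and $z\!-\!y$, the isometry on $\gattract$ from Corollary~\ref{lem:isometry on attractor} for the pair $x,y$, saturate the triangle inequality, and then invoke the uniqueness in Lemma~\ref{lem:strictly convex point unique}. Your explicit handling of the edge cases $x=y$ and $\lambda\in\{0,1\}$ is a minor tidiness improvement, but the argument is the same.
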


\begin{proof}
    Let $d(x,y) = \|x- y\|$.  Let $z = \lambda x + (1 - \lambda) y$, $ d(z,x) = a$ and $d(z,y) = b$. We have that $d(x,y) = d(z,y) + d(z,x) = a+b$. Note that $z$ is the unique point (due to Lemma \ref{lem:strictly convex point unique}) such that $d(z,x)$ and $d(z,y)$ take on these real values $a$ and $b$, respectively. 

    By Corollary~\ref{lem:isometry on attractor}, we have that
    $ d(\phi_t(x),\phi_t(y)) = d(x,y) = a+b$. 
    Since $z$ might not be in ${\gattract}$, we cannot yet assert the isometric relationships $d(\phi_t(z),\phi_t(x)) = a$ or $d(\phi_t(z),\phi_t(y)) = b$.
    However, by nonexpansivity we have that $d(\phi_t(z), \phi_t(x)) \leq d(z, x) = a$, and $d(\phi_t(z), \phi_t(y)) \leq d(z,y) = b$. By the triangle inequality we have that 
    \begin{align*}
        a + b &= d(\phi_t(x),\phi_t(y))) \\
        &\leq d(\phi_t(z), \phi_t(x)) + d(\phi_t(z), \phi_t(y)) \\
        &\leq a + b.
    \end{align*}
    Since the left and right hand are the same we must have that $d(\phi_t(z),\phi_t(x)) = a$ and $ d(\phi_t(z),\phi_t(y)) = b$, as desired.

    Thus $\phi_t(z)$ satisfies the conditions in Lemma \ref{lem:strictly convex point unique} where $x$ and $y$ are replaced with $\phi_t(x)$ and $\phi_t(y)$, respectively. This implies $\phi_t(z) = \lambda \phi_t(x) + (1-\lambda) \phi_t(y)$. Indeed, we have that $d(\phi_t(z),\phi_t(x)) = \| \phi_t(z) - \phi_t(x)\| = (1 - \lambda) \|\phi_t(y) - \phi_t(x)\| = (1 - \lambda) \|y - x\| = a $ and similarly we have that $d(\phi_t(z),\phi_t(y)) = b$. 
\end{proof}

\begin{lem} 
\label{lem:A backward invariant}
    The set ${\gattract}$ is backward and forward invariant.
\end{lem}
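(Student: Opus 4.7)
The plan is to prove forward invariance directly from the definition of $\gattract$ together with forward invariance of $X$, and then to obtain backward invariance from a compactness argument applied to the nested family $\{\phi_s(X)\}_{s \geq 0}$. Strict convexity is not actually used; the argument goes through for any nonexpansive (in fact, any continuous) flow on a compact forward invariant state space.

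For forward invariance, I would fix $x \in \gattract$ and $t \geq 0$ and show $\phi_t(x) \in \phi_s(X)$ for every $s \geq 0$. Since $\gattract \subseteq \phi_s(X)$, write $x = \phi_s(z)$ with $z \in X$; then $\phi_t(x) = \phi_s(\phi_t(z))$, and $\phi_t(z) \in X$ by forward invariance of $X$, so $\phi_t(x) \in \phi_s(X)$. Intersecting over $s$ yields $\phi_t(x) \in \gattract$.

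For backward invariance, I need to produce, for every $x \in \gattract$ and $t \geq 0$, a point $y \in \gattract$ with $\phi_t(y) = x$. First observe that $\{\phi_s(X)\}_{s \geq 0}$ is a decreasing family: for $s \geq s'$, $\phi_s(X) = \phi_{s'}(\phi_{s-s'}(X)) \subseteq \phi_{s'}(X)$. Next, for each integer $n \geq 0$, use $x \in \gattract \subseteq \phi_{t+n}(X)$ to pick $z_n \in X$ with $\phi_{t+n}(z_n) = x$, and set $y_n := \phi_n(z_n)$, so that $\phi_t(y_n) = x$ and $y_n \in \phi_n(X)$. Compactness of $X$ gives a convergent subsequence $y_{n_k} \to y \in X$. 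For any fixed $m$, eventually $n_k \geq m$, and then $y_{n_k} \in \phi_{n_k}(X) \subseteq \phi_m(X)$; since $\phi_m(X)$ is closed, $y \in \phi_m(X)$, whence $y \in \bigcap_m \phi_m(X) = \gattract$. Continuity of $\phi_t$ then gives $\phi_t(y) = \lim_k \phi_t(y_{n_k}) = x$.

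The main obstacle is the backward step: one has a candidate preimage $y_n$ in each layer $\phi_n(X)$, but must splice these into a single preimage lying in the entire intersection $\gattract$. The natural tool is the compactness-plus-subsequence argument outlined above, using the fact that the level sets $\phi_m(X)$ are decreasing and closed; everything else is straightforward unwinding of the definition of $\gattract$.
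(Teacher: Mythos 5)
Your proof is correct, and the backward-invariance half takes a genuinely different route from the paper. For forward invariance you and the paper do essentially the same thing: use the decreasing nesting $\phi_s(X)\subseteq\phi_{s'}(X)$ for $s\geq s'$ and the semigroup property to push $\phi_t(x)$ into every layer. For backward invariance, the paper argues directly with the backward flow, writing $x\in\phi_t(X)$ for $t>s$ iff $\phi_{-s}(x)\in\phi_{t-s}(X)$; this is shorter but implicitly relies on $\phi_{-s}(x)$ being well defined and single-valued along ${\gattract}$ (which holds here because the vector field is $C^1$, so flow maps are injective and the preimage of $x$ under $\phi_s$ is unique). You instead construct a preimage layer by layer, taking $z_n$ with $\phi_{t+n}(z_n)=x$, setting $y_n=\phi_n(z_n)$, and extracting a limit $y$ that lies in every closed set $\phi_m(X)$ and satisfies $\phi_t(y)=x$ by continuity. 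Your argument costs a compactness/subsequence step but never invokes backward uniqueness or the existence of the backward flow, so it applies to any continuous semiflow on a compact forward-invariant set; you are also right that strict convexity plays no role (the paper's own proof does not use it either, despite the lemma's placement in that subsection). One minor point worth making explicit if you write this up: $\bigcap_{m\in\mathbb{N}}\phi_m(X)={\gattract}$ because the family is decreasing, and each $\phi_m(X)$ is closed because it is the continuous image of a compact set.
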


\begin{proof}
    First observe that, for any $s > t$, $\phi_s(X) \subseteq \phi_t(X)$. Indeed, if $x \in \phi_s(X)$ then $x=\phi_s(z) = \phi_{t}(\phi_{s-t}(z))$ for some $z\in X$. Thus $x=\phi_t(y)$, with $y:=\phi_{s-t}(z)$.
   
    Now recall ${\gattract} = \cap_{t>0} \phi_t(X)$. By our previous observation for any $s > 0$ we also have that ${\gattract} = \cap_{t > s} \phi_t(X)$. We have that $x \in {\gattract}$ iff $x \in \phi_t(X)$ for all $t > 0$ iff for any $s>0$ we have that $\phi_s(x) \in \phi_t(X)$ for $t > s$ iff $\phi_s(x) \in \cap_{t > s} \phi_t(X) = {\gattract}$. Thus ${\gattract}$ is forward invariant.

    Suppose again we have $x \in {\gattract}$. Now for each $s >0$ we have that $x \in {\gattract}$ iff $x \in \cap_{t > s} \phi_t(X)$ iff $\phi_{-s}(x) \in  \cap_{t > 0} \phi_t(X) = {\gattract}$. Since $s$ was arbitrary ${\gattract}$ must be backwards invariant as well. 
\end{proof}

\begin{lem}
    The set ${\gattract}$ is convex.
\end{lem}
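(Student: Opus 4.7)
The plan is to exploit two structural features already established: the backward invariance of $\gattract$ from Lemma \ref{lem:A backward invariant}, and the fact that the flow on $\gattract$ commutes with convex combinations (Lemma \ref{lem:convex sums}). Together these let me realize every convex combination of two attractor points as the $t$-image of some point of $X$, for every $t\geq 0$.

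\textbf{Setup.} Pick $x,y\in \gattract$, $\lambda\in[0,1]$, and set $z=\lambda x + (1-\lambda)y$. My goal is to show $z\in\cap_{t\geq 0}\phi_t(X) = \gattract$. By the discussion after Corollary \ref{cor:equilibrium}, I may and will assume that $X$ is a norm ball around an equilibrium, hence convex.

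\textbf{Main step.} Fix $t\geq 0$. Since $\gattract$ is backward invariant (Lemma \ref{lem:A backward invariant}), the points $x_t := \phi_{-t}(x)$ and $y_t := \phi_{-t}(y)$ exist and lie in $\gattract \subseteq X$. By convexity of $X$, the point $w_t := \lambda x_t + (1-\lambda) y_t$ also lies in $X$, so $\phi_t(w_t)$ is defined. Applying Lemma \ref{lem:convex sums} to the attractor points $x_t,y_t$ gives
\begin{equation*}
\phi_t(w_t) \;=\; \lambda\,\phi_t(x_t) + (1-\lambda)\,\phi_t(y_t) \;=\; \lambda x + (1-\lambda) y \;=\; z.
\end{equation*}
Hence $z\in\phi_t(X)$. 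Since $t\geq 0$ was arbitrary, $z\in\cap_{t\geq 0}\phi_t(X)=\gattract$, which proves convexity.

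\textbf{Where the work lies.} Essentially all of the heavy lifting has already been done: strict convexity of the norm entered through Lemma \ref{lem:strictly convex point unique} to force the flow to preserve convex combinations on $\gattract$, and Corollary \ref{lem:isometry on attractor} provided the isometric action needed in that proof. The only subtle point in the present argument is the need to lift $x$ and $y$ backward by every $t$; this is exactly what backward invariance gives, and it is the reason the naive attempt (apply Lemma \ref{lem:convex sums} directly at time $t=0$ to conclude $z\in\gattract$) does not work without first proving that $z$ has a preimage in $X$ for every positive time. The convexity of $X$ (guaranteed by restricting to a norm ball around an equilibrium) is what makes the preimage $w_t$ admissible.
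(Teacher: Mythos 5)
Your proof is correct and follows essentially the same route as the paper: lift $x$ and $y$ backward by $t$ using Lemma \ref{lem:A backward invariant}, apply Lemma \ref{lem:convex sums} to the lifted points, and conclude $z\in\phi_t(X)$ for every $t$. In fact you are slightly more careful than the paper at one point --- you justify that the preimage $w_t$ lies in $X$ (via convexity of the ball $X$), whereas the paper asserts $z\in\phi_t(\gattract)$ for a point $z'$ not yet known to be in $\gattract$ --- but the argument is the same.
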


\begin{proof}
    Take arbitrary $x,y \in {\gattract}$. 
    Pick any $0 < \lambda < 1$.
    We need to show that $z:=\lambda x + (1-\lambda) y \in {\gattract}$. 
    Since ${\gattract}$ is backwards invariant by Lemma \ref{lem:A backward invariant}, for all $t>0$ there exist $x',y' \in {\gattract}$ such that $\phi_t(x') = x$ and $\phi_t(y') = y$. By Lemma \ref{lem:convex sums} this means that, for each $0 < \lambda < 1$: 
    \[
    \phi_t(\lambda x' + (1 - \lambda) y') = \lambda \phi_t(x') + (1-\lambda) \phi_t(y') = \lambda x + (1-\lambda) y .
    \]
    The above equation implies that for all $t>0$, we can find a $z'$ such that $\phi_t(z') = z$. Thus for all $t>0$ we must have that $z \in \phi_t({\gattract}) \subseteq \phi_t(X)$. Thus $z \in \cap_{t>0} \phi_t(X) = {\gattract}$. In other words, for all $x,y \in {\gattract}$ and for all $0 < \lambda < 1$ we have that $\lambda x + (1-\lambda) y \in {\gattract}$, as claimed.
\end{proof}

Since ${\gattract}$ is compact and convex, the vector field $f$ restricted to ${\gattract}$ has a fixed point, by Lemma \ref{lem:fixed point}. Without loss of generality, we can view this fixed point as the origin in $\mathbb{R}$, so from now on we assume that ${\gattract}$ contains $0$ and that $f(0)=0$. Thus also $\phi_t(0)=0$ for all $t$.

In the next result, we use Mankiewicz's Theorem
(see for example \cite{Jung+2022+1353+1379}).
This theorem applies to any isometry $g : E  \rightarrow Y $,
where $E$ is a nonempty subset of a real normed space $X$, and $Y$ is a 
real normed space.
If either both $E$ and $g(E)$ are convex bodies (compact and convex with nonempty interior) or if $E$ is open and connected and $g(E)$ is open, then $g$ can be uniquely extended to an affine isometry $F : X \rightarrow Y$.

\begin{lem}
\label{lem:one parameter isometry family}
    Let $V$ be the linear span of ${\gattract}$. There exists a one-parameter family of affine isometries $F_t$ on $V$ such that $F_t$ is an extension of $\phi_t$ restricted to ${\gattract}$. 
\end{lem}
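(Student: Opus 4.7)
The plan is to invoke Mankiewicz's theorem as the paragraph preceding the lemma suggests. The map I want to extend is $\phi_t|_{\gattract}:\gattract\to\gattract$. By Corollary~\ref{lem:isometry on attractor} this restriction preserves distances, and it is surjective onto $\gattract$: indeed, Lemma~\ref{lem:A backward invariant} gives backward invariance, so every $y\in\gattract$ equals $\phi_t(\phi_{-t}(y))$ with $\phi_{-t}(y)\in\gattract$. Thus $\phi_t|_{\gattract}$ is a bijective isometry from $\gattract$ to $\gattract$ in the normed space $V=\spans(\gattract)$ endowed with the restricted norm.

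To apply Mankiewicz's theorem one needs $\gattract$ and its image $\phi_t(\gattract)=\gattract$ to be convex bodies in $V$, i.e.\ compact, convex, and having nonempty interior in $V$. Compactness and convexity come from the preceding lemmas. The remaining subtlety, and the only genuinely delicate point of the proof, is the interior condition. Since $0\in\gattract$ (the fixed point from Lemma~\ref{lem:fixed point} has been translated to the origin), the affine hull of $\gattract$ coincides with its linear span $V$. I then appeal to the standard convex-analytic fact that every convex set has nonempty interior relative to its own affine hull. This places $\gattract$ as a convex body in $V$, and the same holds for its image.

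With the hypotheses verified, Mankiewicz's theorem produces, for each $t\ge0$, a unique affine isometry $F_t:V\to V$ extending $\phi_t|_{\gattract}$, which is exactly the claimed one-parameter family. As a free bonus, uniqueness of the extension together with $\phi_{t+s}=\phi_t\circ\phi_s$ on $\gattract$ forces $F_{t+s}=F_t\circ F_s$ and $F_0=\mathrm{id}_V$, so the family is actually a one-parameter semigroup of affine isometries; moreover $F_t(0)=\phi_t(0)=0$, so each $F_t$ is in fact linear. Neither of these is needed for the statement of the lemma, but both will be convenient in what follows.
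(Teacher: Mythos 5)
Your proposal is correct and follows essentially the same route as the paper: both verify that $\gattract$ and its image $\phi_t(\gattract)=\gattract$ are convex bodies in $V$ and then invoke Mankiewicz's theorem. The only cosmetic difference is that you certify the nonempty-interior condition by citing the standard fact that a nonempty convex set has nonempty relative interior in its affine hull (which equals $V$ since $0\in\gattract$), whereas the paper exhibits interior points explicitly via convex combinations of a maximal linearly independent subset of $\gattract$; your closing observations (semigroup property, $F_t(0)=0$, hence linearity) match remarks the paper makes immediately after the lemma.
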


\begin{proof}
    Fix any $t>0$. We know by Lemma \ref{lem:isometry on attractor} that $\phi_t$ is an isometry on the convex set ${\gattract}$.
    If ${\gattract}=\{0\}$ then the result is trivial, so assume ${\gattract}\not=\{0\}$. Let $\{v_1, \ldots, v_m\}$ be a maximal linearly independent set of vectors in ${\gattract}$. Thus $V$ is the span of $\{v_1, \ldots, v_m\}$. 
    Every linear combination $p = \sum_{i=1}^m\lambda_iv_i$ with all $\lambda_i>0$ and $\sum_{i=1}^m\lambda_i<1$ belongs to ${\gattract}$ (since $p=(1-\sum_{i=1}^m\lambda_i)0 + \sum_{i=1}^m\lambda_iv_i$ is in ${\gattract}$, by convexity and because $0\in {\gattract}$). So ${\gattract}$ has a nonempty interior in $V$. It follows that ${\gattract}$ is a convex body relative to $V$.     
    We now apply Mankiewicz's Theorem with $g=\phi_t$, $E={\gattract}$, and $X=Y=V$. Note that $g({\gattract})={\gattract}$ because ${\gattract}$ is backwards complete, so that $g({\gattract})$ is a convex body as needed for the theorem.
    Thus we have an extension to an affine transformation $F_t$ on $V$. 
\end{proof}

As every $\phi_t$ vanishes at zero (recall that we assumed this without loss of generality), so do the mappings $F_t$ from Lemma \ref{lem:one parameter isometry family}. Therefore each $F_t$ is a linear map.
Since each $F_t$ is an isometry, $F_t$ is nonsingular, that is, $F_t\in GL_m(\mathbb{R})$.

\begin{lem}
\label{lem:Ft continous}
    The mappings $F_t$ vary continuously with $t$.
\end{lem}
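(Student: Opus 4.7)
The plan is to exploit the finite-dimensionality of $V$ together with the fact that each $F_t$ is completely determined by its values on the basis $\{v_1,\ldots,v_m\}\subseteq\mathbb{A}$ appearing in the proof of Lemma~\ref{lem:one parameter isometry family}. Since these basis vectors lie in $\mathbb{A}$ and $F_t$ extends $\phi_t$ restricted to $\mathbb{A}$, we have $F_t(v_i)=\phi_t(v_i)$ for each $i$, and the standard continuous dependence of solutions of $\dot x = f(x)$ on time immediately yields continuity of $t\mapsto F_t(v_i)$ for each $i$.

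More concretely, I would first fix the basis $\{v_1,\ldots,v_m\}$ of $V$ and express each $F_t$ as an $m\times m$ matrix $M(t)$ in this basis: for each $i$, write
\[
\phi_t(v_i) \;=\; \sum_{j=1}^{m} m_{ji}(t)\, v_j,
\]
which is possible because $\phi_t(v_i)\in\mathbb{A}\subseteq V$ and $\{v_1,\ldots,v_m\}$ is a basis of $V$. The coordinate functionals $\pi_j:V\to\mathbb R$ that send a vector to its $j$-th coordinate in this basis are linear, hence continuous (on the finite-dimensional space $V$). Since $t\mapsto\phi_t(v_i)$ is continuous in $V$ by standard ODE theory ($f$ is $C^1$, so the flow is jointly continuous in $(t,x)$), each entry $m_{ji}(t)=\pi_j(\phi_t(v_i))$ is continuous in $t$.

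Finally, I would conclude that $t\mapsto M(t)$ is continuous as a map into $GL_m(\mathbb{R})$ (note $M(t)$ is invertible since $F_t$ is an isometry, hence nonsingular, as already noted just before the lemma statement), which is equivalent to saying that $F_t$ varies continuously with $t$ in the operator-norm topology on linear endomorphisms of $V$. Since all norms on the finite-dimensional space of linear operators on $V$ are equivalent, the choice of topology is immaterial.

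There is no real obstacle here: the only subtlety is to notice that we must work relative to $V$, not all of $\mathbb{R}^n$, and that the uniqueness clause of Mankiewicz's Theorem used in Lemma~\ref{lem:one parameter isometry family} is what guarantees $F_t$ is genuinely determined by $\phi_t\big|_{\mathbb{A}}$, so that the continuity of the flow transfers directly to continuity of $F_t$. The rest is essentially the observation that a linear map on a finite-dimensional space depends continuously on a parameter if and only if its action on a fixed basis does.
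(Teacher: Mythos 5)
Your proof is correct and follows essentially the same route as the paper: both arguments fix a basis of $V$ consisting of vectors in $\gattract$, note that $F_t$ agrees with $\phi_t$ on these vectors because $\gattract$ is invariant, and deduce continuity of $t\mapsto F_t$ from continuity of the flow plus the fact that a linear map on a finite-dimensional space is determined continuously by its action on a basis. Your additional remarks about coordinate functionals and the uniqueness clause of Mankiewicz's Theorem just make explicit what the paper leaves implicit.
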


\begin{proof}
    Since $f$ is a $C^1$ vector field, the $\phi_t$ mappings vary continuously with $t$ on the compact and convex set ${\gattract}$. 
    Suppose that $V$ (i.e., the span of ${\gattract}$) is $m$ dimensional.
    We can find $m$ linearly independent vectors $x_1,x_2,...,x_m$ in ${\gattract}$ that span $V$.
    Since ${\gattract}$ is forward and backwards invariant, for each $1 \leq i \leq m$ and each $t$, $\phi_t(x_i)\in {\gattract}$, and hence $F_t(x_i) = \phi_t(x_i)$. Thus $F_t(x_i)$ varies continuously with $t$ since $\phi_t(x_i)$ varies continuously with $t$. We conclude that the mapping $t \rightarrow F_t$ is continuous as a map $\mathbb{R} \rightarrow GL_m(\mathbb{R})$. 
\end{proof}

\begin{lem}
    We have that $F_t = e^{Bt}$ for some linear transformation $B$ on $V$.
\end{lem}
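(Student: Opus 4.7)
The plan is to show that $\{F_t\}_{t\geq 0}$ is a continuous one-parameter semigroup of linear maps on $V$, extend it to a group indexed by $\mathbb{R}$, and then invoke the classical fact that every continuous one-parameter subgroup of $GL_m(\mathbb{R})$ is a matrix exponential.

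First I would establish the semigroup property $F_{t+s}=F_t\circ F_s$ for all $s,t\geq 0$. For this, observe that on the convex body $\gattract$ we have $F_s(x)=\phi_s(x)\in \gattract$ by forward invariance (Lemma \ref{lem:A backward invariant}), so
\[
F_t(F_s(x)) \;=\; F_t(\phi_s(x)) \;=\; \phi_t(\phi_s(x)) \;=\; \phi_{t+s}(x) \;=\; F_{t+s}(x).
\]
Thus $F_t\circ F_s$ and $F_{t+s}$ are two linear isometries of $V$ that agree on $\gattract$. Since $\gattract$ is a convex body in $V$ (nonempty interior by the argument in the proof of Lemma \ref{lem:one parameter isometry family}) and its image under either map is again the convex body $\gattract$, the uniqueness clause of Mankiewicz's theorem forces $F_t\circ F_s = F_{t+s}$ on all of $V$. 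In particular, $F_0$ is the identity (it extends $\phi_0=\mathrm{id}_{\gattract}$, and the identity works), and each $F_t$ has inverse $F_t^{-1}\in GL_m(\mathbb{R})$.

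Next I would extend the family to negative times by setting $F_{-t}:=F_t^{-1}$ for $t\geq 0$. The semigroup relation on $[0,\infty)$ together with the invertibility of each $F_t$ immediately yields the group relation $F_{t+s}=F_tF_s$ for all $s,t\in\mathbb{R}$. Continuity on $\mathbb{R}$ follows from continuity on $[0,\infty)$ (Lemma \ref{lem:Ft continous}) and the continuity of matrix inversion on $GL_m(\mathbb{R})$. Therefore $t\mapsto F_t$ is a continuous homomorphism $\mathbb{R}\to GL_m(\mathbb{R})$.

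Finally I would invoke the standard result that every continuous one-parameter subgroup of $GL_m(\mathbb{R})$ is of the form $t\mapsto e^{tB}$ for a uniquely determined $B\in\mathfrak{gl}_m(\mathbb{R})$; the generator can be recovered as $B=\lim_{t\to 0^+}(F_t-I)/t$, with existence guaranteed by continuity of the group. Applying this to our family gives $F_t=e^{tB}$ on $V$, as desired. The main subtlety in this argument is the semigroup step: one must use Mankiewicz uniqueness to pass from equality on the convex body $\gattract$ to equality as linear maps on the ambient space $V$; everything else is an application of classical Lie-theoretic facts.
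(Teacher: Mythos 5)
Your proof is correct and follows essentially the same route as the paper: show that $\{F_t\}$ forms a continuous one-parameter subgroup of $GL_m(\mathbb{R})$ and invoke the classical theorem that such a subgroup is of the form $e^{Bt}$. The only difference is that you explicitly justify the semigroup identity $F_tF_s=F_{t+s}$ via the uniqueness clause of Mankiewicz's theorem and extend the family to negative times via $F_{-t}=F_t^{-1}$, steps the paper asserts without comment; these additions tighten the argument but do not change its substance.
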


\begin{proof}
    Since $F_0 = I$ (here $I$ is the identity transformation), $F_t F_s = F_{s+t}$, and $F_t$ varies continuously in $t$ , the set of transformations $F_t$ is a one parameter subgroup of $GL_m(\mathbb{R})$. By Theorem 2.14 in \cite{hall2003lie} we can conclude that there exists a unique linear map $B \subseteq GL_m(\mathbb{C})$ such that $F_t = e^{Bt}$. Note that since $B = \frac{d}{dt} F_t|_{t=0}$ we in fact must have $B \subseteq GL_m(\mathbb{R})$
\end{proof}

The following is a standard property of center manifolds of linear time-invariant systems (see for example Problem 5 in Problem Set 9 in~\cite{perko}); we provide a proof for completeness.

\begin{lem}
\label{lem:linear ellipsoid trajectories}
    Suppose a linear system $\dot{x} = Bx$ satisfies that its trajectories are bounded and do not converge to 0. Then the matrix $B$ has only eigenvalues with 0 real part, and it is diagonalizable.
\end{lem}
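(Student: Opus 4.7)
My plan is to analyze $B$ via its Jordan canonical form over $\mathbb{C}$, since on each Jordan block the matrix exponential $e^{Bt}$ is explicit, and the three conclusions (no eigenvalue with positive real part, no eigenvalue with negative real part, and no nontrivial Jordan block) can each be read off directly from the growth/decay rates of the resulting trajectories. Throughout, ``trajectories do not converge to $0$'' is interpreted in the natural way that no nonzero trajectory converges to the origin, consistent with the intended application where the flow on $\gattract$ is an isometry fixing $0$.

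First I would rule out any eigenvalue $\lambda$ of $B$ with $\mathrm{Re}(\lambda) > 0$. For such a $\lambda$ and a (possibly complex) eigenvector $v \neq 0$, we have $\|e^{Bt} v\| = e^{\mathrm{Re}(\lambda) t} \|v\| \to \infty$. Since $B$ is real, splitting $v = u_1 + i u_2$ into real and imaginary parts gives two real trajectories $e^{Bt}u_1$ and $e^{Bt}u_2$ at least one of which is nonzero, and the same exponential growth rate is inherited by their norms. This contradicts the hypothesis that every trajectory is bounded.

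Next I would rule out any eigenvalue $\lambda$ with $\mathrm{Re}(\lambda) < 0$ by a mirror argument: an eigenvector $v \neq 0$ yields $\|e^{Bt}v\| = e^{\mathrm{Re}(\lambda) t}\|v\| \to 0$, and $\mathrm{Re}(e^{Bt}v),\mathrm{Im}(e^{Bt}v)$ are real trajectories that converge to the origin, at least one of which is nonzero. This contradicts the assumption that no nonzero trajectory tends to $0$. With both steps combined, every eigenvalue now has zero real part.

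Finally, assuming all eigenvalues are purely imaginary, I would rule out nontrivial Jordan blocks. If $\lambda = i\nu$ has a block of size $k \geq 2$, pick a generalized eigenvector $v$ with $(B-\lambda I)^{k-1} v \neq 0$ and $(B-\lambda I)^k v = 0$. Then
\[
e^{Bt} v \;=\; e^{\lambda t} \sum_{j=0}^{k-1} \frac{t^j}{j!}(B-\lambda I)^j v,
\]
and because $|e^{\lambda t}|=1$ the dominant term has norm of order $t^{k-1}$; passing again to real and imaginary parts gives a real unbounded trajectory, contradicting boundedness. Hence every block has size one, i.e.\ $B$ is diagonalizable over $\mathbb{C}$. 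The only mildly delicate step is the real-versus-complex bookkeeping, since $B$ is real while its eigenvectors need not be; this is routine, handled by the observation that whenever $v \neq 0$ at least one of $\mathrm{Re}(v),\mathrm{Im}(v)$ is nonzero, so the asymptotic behavior of the complex trajectory is realized by an honest real trajectory. No deeper obstacle arises.
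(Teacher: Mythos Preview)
Your proof is correct and follows essentially the same approach as the paper's: both rule out eigenvalues with nonzero real part by exhibiting trajectories that diverge or converge to $0$, and then rule out nontrivial Jordan blocks by the polynomial growth of $e^{Bt}$ on generalized eigenvectors. Your version is simply more explicit about the real/complex bookkeeping, which the paper leaves implicit.
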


\begin{proof}
    Note that if any eigenvalue had negative real part, then we can find a trajectory converging to 0. If any had positive real part, we could find a trajectory diverging to infinity.
    
    Note that there exists $P \in GL_m(\mathbb{C})$ such that $B = P N P^{-1}$ where $N$ is in Jordan normal form. Note that $e^{Nt}$ has diagonal blocks with $t$'s on the off diagonal if any of the blocks are not diagonal matrices. This would imply again diverging trajectories, thus all the blocks must be diagonal and so $B$ is diagonalizable.
\end{proof}

We will call such linear differential equations \textit{conserved} linear equations.
A quadratic Lyapunov function for such systems can be constructed as usual through the solution of a Lyapunov equation (see e.g.~\cite{mct}). Again for completeness, we provide a proof.

\begin{lem}
\label{lem:conserved P form}
    Every conserved linear system has a quadratic form $P$ such that $\frac{d x^\top P x}{dt} = 0$. 
\end{lem}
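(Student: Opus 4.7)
The plan is to construct a symmetric positive definite matrix $P$ satisfying the Lyapunov-type identity $B^{\top} P + P B = 0$. Once such a $P$ is in hand, a one-line computation along $\dot{x}=Bx$ gives
\[
\frac{d}{dt}\bigl(x^{\top} P x\bigr) \;=\; \dot{x}^{\top}Px + x^{\top}P\dot{x} \;=\; x^{\top}\bigl(B^{\top}P + PB\bigr)x \;=\; 0,
\]
which is exactly the claim. So the lemma reduces to producing $P$.

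First I would invoke Lemma~\ref{lem:linear ellipsoid trajectories}: because the system is conserved, $B$ is diagonalizable over $\mathbb{C}$ and all of its eigenvalues are purely imaginary. Since $B$ is real, nonzero eigenvalues come in conjugate pairs $\pm i\omega_k$, and there may also be a zero eigenvalue of some multiplicity. Grouping each conjugate pair of complex eigenvectors into their real and imaginary parts, and choosing a real basis for the kernel of $B$, one obtains a real invertible matrix $S$ and a block-diagonal real matrix $J=S^{-1}BS$ whose blocks are either $1\times 1$ zeros or $2\times 2$ skew-symmetric rotation generators of the form $\bigl(\begin{smallmatrix}0 & -\omega_k\\ \omega_k & 0\end{smallmatrix}\bigr)$. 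In particular $J^{\top}=-J$, so that $J^{\top}I + I\,J = 0$, i.e.\ the identity serves as a Lyapunov certificate in the canonical coordinates.

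Next I would pull this certificate back to the original coordinates by setting $P := (S^{-1})^{\top}S^{-1}$, which is automatically symmetric and positive definite. Writing $B = SJS^{-1}$ and computing,
\[
B^{\top}P + PB \;=\; (S^{-1})^{\top}J^{\top}S^{-1} + (S^{-1})^{\top}J S^{-1} \;=\; (S^{-1})^{\top}(J^{\top}+J)S^{-1} \;=\; 0,
\]
which is what was needed. The quadratic form $x \mapsto x^{\top}Px$ is then conserved along every trajectory.

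The only delicate step is the real block-diagonalization of $B$; this is standard linear algebra but I would spell out the conjugate-pair basis to make it self-contained. An alternative that sidesteps canonical forms, and which I would mention as a fallback, is the averaging construction $P := \lim_{T\to\infty}\frac{1}{T}\int_{0}^{T} e^{B^{\top}t}\,e^{Bt}\,dt$: the limit exists because $\{e^{Bt}\}$ is a bounded one-parameter group (this uses exactly the hypothesis of Lemma~\ref{lem:linear ellipsoid trajectories}), and invariance of the Haar-like average under the group action gives $e^{B^{\top}s}P\,e^{Bs}=P$ for all $s$, which differentiated at $s=0$ yields $B^{\top}P + PB=0$. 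Either route produces the same conclusion.
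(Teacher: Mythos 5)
Your proof is correct and follows essentially the same route as the paper: use Lemma~\ref{lem:linear ellipsoid trajectories} to put $B$ into real block-diagonal skew-symmetric form $J=S^{-1}BS$, pull the identity back through the change of basis to get a symmetric positive definite $P$ with $B^{\top}P+PB=0$, and conclude by differentiating $x^{\top}Px$ along trajectories. If anything, your bookkeeping is the more careful one --- with the convention $J=S^{-1}BS$ the correct certificate is indeed $P=(S^{-1})^{\top}S^{-1}$, whereas the paper's $P=L^{\top}L$ only matches its stated convention up to swapping $L$ and $L^{-1}$ --- and the Haar-averaging alternative you mention is a nice, basis-free fallback.
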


\begin{proof}

Consider a conserved linear system $\dot{x} = Bx$. Note by Lemma \ref{lem:linear ellipsoid trajectories} we can diagnoalize $B$ with a real matrix $L$. In other words, $L^{-1} B L$ is such that it is a skew symmetric matrix consisting of diagonal blocks of the form 
\[
\begin{bmatrix}
    0 & \alpha \\
    -\alpha & 0
\end{bmatrix}.
\]
Let $P = L^\top L$. We have that
\begin{align*}
    B^\top L^\top L + L^\top L B &= (L^{-1} B L)^\top L^\top L + L^\top L (L^{-1} B L) \\
    &= L^\top B^\top L + L^\top B L \\
    &= -L^\top B L + L^\top B L = 0.
\end{align*}
Thus $\frac{d x^\top P x}{dt} = x^\top(B^\top P + P B)x = 0$.
\end{proof}

\begin{lem}
\label{lem:point in own omega}
    For a conserved linear system $\dot{x} = Bx$, every point $x_0$ is in its own omega limit set.
\end{lem}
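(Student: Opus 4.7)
The plan is to exploit Lemma~\ref{lem:linear ellipsoid trajectories}: since $B$ is diagonalizable with purely imaginary eigenvalues, the one-parameter group $\{e^{Bt}:t\in\mathbb{R}\}$ consists of linear isometries of the inner product $\langle x,y\rangle_P := x^\top Py$ coming from Lemma~\ref{lem:conserved P form} (indeed $B^\top P+PB=0$ implies $(e^{Bt})^\top P e^{Bt}=P$). Thus $\{e^{Bt}\}$ sits inside the compact orthogonal group $O(P)$, and its closure
\[
G \;:=\; \overline{\{e^{Bt}:t\in\mathbb{R}\}} \;\subseteq\; GL_m(\mathbb{R})
\]
is a compact abelian Lie subgroup. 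The continuous homomorphism $\phi:\mathbb{R}\to G$, $\phi(t)=e^{Bt}$, has dense image by construction.

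The key step, which I expect to be the main obstacle, is to upgrade this into density of the \emph{forward} semi-orbit: for every $N\geq 0$, I will show $\overline{\phi([N,\infty))} = G$. Write $C_N := \overline{\phi([N,\infty))}$. For any $s>0$,
\[
\phi(s)\,\phi([N,\infty)) \;=\; \phi([N+s,\infty)) \;\subseteq\; \phi([N,\infty)),
\]
so $\phi(s)\,C_N\subseteq C_N$. Since left-multiplication on the compact group $G$ preserves Haar measure and $C_N$ is closed, this inclusion of sets of equal measure forces $\phi(s)\,C_N = C_N$. The stabilizer $H := \{g\in G : g\,C_N = C_N\}$ is then a subgroup of $G$ (closed, by continuity of multiplication and inversion) containing $\phi((0,\infty))$ and hence $\phi(\mathbb{R})$; being closed, $H\supseteq \overline{\phi(\mathbb{R})}=G$. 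Therefore $G\cdot c \subseteq C_N$ for any $c\in C_N$, and since $G$ acts transitively on itself by left multiplication, $C_N = G$.

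Once $\overline{\phi([N,\infty))} = G$ is in hand for every $N$, the conclusion is immediate: because $I = \phi(0)\in G$, I can pick $N_k\to\infty$ together with $t_k > N_k$ satisfying $\|e^{Bt_k} - I\| < 1/k$, so $t_k\to\infty$ and $e^{Bt_k}\to I$. Continuity of matrix-vector multiplication gives $e^{Bt_k}x_0 \to x_0$, which places $x_0$ in $\omega(x_0)$. The difficulty is precisely the passage from density of $\phi(\mathbb{R})$ to recurrence at arbitrarily large times: a priori, approximating times might fail to tend to infinity, and the Haar-measure argument above is what rules this out.
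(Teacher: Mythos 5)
Your overall architecture is sound and genuinely different from the paper's: the paper proves recurrence by an explicit pigeonhole argument on the rotation angles $(\alpha_1 t,\dots,\alpha_l t)$ taken modulo $2\pi$, producing return times $t_i\to\infty$ with $e^{Bt_i}\to I$ directly, whereas you embed $\{e^{Bt}\}$ in the compact group $\{M: M^\top P M=P\}$ and reduce the lemma to the statement that the forward semi-orbit is dense in the closure $G$. The setup is correct, and the final deduction from $\overline{\phi([N,\infty))}=G$ is correct. But the one step that carries all the content has a gap: you claim that $\phi(s)C_N\subseteq C_N$ together with equality of Haar measures forces $\phi(s)C_N=C_N$. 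That inference is false in general: a proper compact subset of a compact set can have the same Haar measure (e.g.\ $A\subsetneq A\cup\{p\}$ in any nonatomic group). Equal measure only gives $\mu\bigl(C_N\setminus\phi(s)C_N\bigr)=0$, and since $C_N=\phi([N,N+s])\cup C_{N+s}$, the difference $C_N\setminus\phi(s)C_N$ is contained in the arc $\phi([N,N+s])$, which will typically have Haar measure zero in $G$ whether or not it is empty. So the measure count yields nothing precisely where the recurrence must be produced; indeed $\phi(s)C_N=C_N$ is literally the assertion that every $\phi(t)$ with $t\in[N,N+s]$ is a limit of $\phi(t_k)$ with $t_k\ge N+s$, i.e.\ the recurrence you set out to prove.

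The gap is repairable within your framework. Replace $C_N$ by $K:=\bigcap_{N\ge0}C_N$, the $\omega$-limit set of $I$ under the flow $g\mapsto\phi(t)g$ on the compact space $G$. Then $K$ is nonempty (a nested intersection of nonempty compacta) and genuinely invariant, $\phi(s)K=K$ for all $s\in\mathbb{R}$, so for any $k\in K$ one gets $\overline{\phi(\mathbb{R})}\,k=Gk=G\subseteq K$, hence $C_N=G$ for every $N$ and your concluding step goes through. (Equivalently, invoke the classical fact that a closed subsemigroup of a compact group is a subgroup, applied to $\overline{\phi([0,\infty))}$; its proof is itself a pigeonhole/recurrence argument, which is morally the same mechanism the paper uses concretely on the torus of rotation angles.)
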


\begin{proof}
    Assume upon a linear transformation that $B$ is block-diagonal with blocks that are either 2 by 2 skew symmetric matrices or 1 by 1 zero matrices. The trajectory of $x_0$ is thus $e^{Bt} x_0$ where $e^{Bt}$ consists of 2 by 2 blocks of rotation matrices on its diagonal of the form.
\[
    \begin{bmatrix}
        \cos (\alpha_i t) & -\sin (\alpha_i t) \\
         \sin(\alpha_i t) & \cos (\alpha_i t)
    \end{bmatrix}
    \]
    as well as 1's in diagonal entries corresponding to zero entries in $B$.
    Put the $\alpha_i$ terms into a row vector $[\alpha_1,\alpha_2,...,\alpha_l]t$ and consider this vector modulo $2 \pi$. Divide up the region $[0, 2\pi]^n$ into boxes of side length at most $\epsilon$. Note that for any $\epsilon > 0$ and any $\delta > 0$ by the pigeonhole principle we can always find $t_1$ and $t_2$ such that $|t_1 - t_2|$ is bounded below by $\delta > 0$ and that
    \[|[\alpha_1,\alpha_2,...,\alpha_l]t_1 - [\alpha_1,\alpha_2,...,\alpha_l]t_2| < [\epsilon,\epsilon,...,\epsilon].\]
(Here the absolute value and comparison are done element-wise.) 
    Indeed, the set of points $\{(t_1 + \delta j) [\alpha_1,\alpha_2,...,\alpha_l]| j \in \mathbb{N} \}$ (taken modulo $2 \pi$) is an infinite set of points in $[0, 2\pi]^n$ and thus we can find 2 different points in the same box (from the boxes we have previously divided our region into). These two points precisely satisfy out inequality. 

    Thus if $t_2 > t_1$ then at time $t = t_2-t_1$ we have that $e^{Bt}$ is close to the identity matrix. This is due to the fact that if all the $\alpha_i t$ are close to multiples of $2 \pi$, all the 2 by 2 rotation matrices will be close to being identity matrices. Picking $\delta_i = i$ and $\epsilon_i = \frac{1}{i}$ we can always find a corresponding $t_i > \delta_i$ such that
    $e^{B t_i}  \rightarrow I$ as $t_i \rightarrow \infty$. In particular, for each $x_0$,
    $e^{B t_i} x_0 \rightarrow x_0$ as $t_i \rightarrow \infty$. Thus $x_0$ is in its own omega limit set.
\end{proof}

\begin{lem}
\label{lem:describe linear system trajectories}
    For a conserved linear system $\dot{x} = Bx$ the trajectories are homeomorphic to an $k$-torus $(S^1)^k$ for some integer $k$.
\end{lem}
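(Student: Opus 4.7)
The plan is to leverage the real normal form of $B$ established in the proof of Lemma \ref{lem:point in own omega} and then reduce the claim to the classical fact that the closure of a one-parameter subgroup of a torus is itself a subtorus. By Lemma \ref{lem:linear ellipsoid trajectories}, after a real linear change of coordinates (as constructed in the proof of Lemma \ref{lem:conserved P form}), $B$ is block-diagonal with blocks that are either $2 \times 2$ skew-symmetric with frequencies $\alpha_j > 0$, for $j = 1,\ldots, l$, or a single zero block.

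Fix an initial condition $x_0$ and decompose $x_0 = (v_1, \ldots, v_l, w)$ according to this block structure. Under $e^{Bt}$, each component $v_j \in \mathbb{R}^2$ rotates at angular velocity $\alpha_j$, tracing out a circle $C_j$ in its 2D plane whenever $v_j \neq 0$ (and remaining at the origin otherwise), while $w$ is fixed by the zero block. Let $J = \{j : v_j \neq 0\}$ and $k = |J|$. Then the entire orbit lies in $T := \prod_{j \in J} C_j \times \{w\}$, which is homeomorphic to $(S^1)^k$ via angular coordinates.

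In these angular coordinates the orbit is precisely the trace of the straight-line flow $t \mapsto (\theta_j^0 + \alpha_j t)_{j \in J}$ on $T$. By Kronecker's theorem (equivalently, the classification of closed subgroups of a torus), the closure of such a one-parameter subgroup is itself a subtorus $(S^1)^m$, where $m$ equals the $\mathbb{Q}$-dimension of $\mathrm{span}_{\mathbb{Q}}\{\alpha_j : j \in J\}$. Combined with Lemma \ref{lem:point in own omega}, which asserts that $x_0$ is in its own $\omega$-limit set, the orbit is recurrent and so its closure coincides with its $\omega$-limit set; this closure is therefore homeomorphic to $(S^1)^m$.

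The main subtlety is interpreting the word ``trajectory'' in the statement, since an orbit with incommensurate frequencies is itself a noncompact injective image of $\mathbb{R}$ and hence not a torus. I read the statement as referring to the orbit closure --- equivalently, by the recurrence of Lemma \ref{lem:point in own omega}, the $\omega$-limit set. With that reading, the remainder is routine bookkeeping in the block decomposition together with one appeal to Kronecker's density theorem.
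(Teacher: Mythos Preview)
Your argument is correct and follows the same overall architecture as the paper: pass to the real block-diagonal form, discard the coordinates on which $x_0$ vanishes, embed the orbit in an ambient torus, identify the orbit closure as a subtorus, and use Lemma~\ref{lem:point in own omega} to equate the orbit closure with the $\omega$-limit set. Your reading of ``trajectory'' as the $\omega$-limit set matches the paper's intent, as is clear from how the lemma is used in Theorem~\ref{thm:strictly convex describe omega limit sets}.

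The one substantive difference is the tool used for the step ``closure of a one-parameter subgroup of a torus is a subtorus.'' You invoke Kronecker's theorem directly in angular coordinates on the product of circles $\prod_{j\in J} C_j$. The paper instead works with the matrix group $G=\{e^{Bt}:t\in\mathbb{R}\}$ inside the ambient torus $T$ of block-diagonal rotation matrices, applies the Closed Subgroup Theorem to conclude $\bar G$ is a compact connected abelian Lie subgroup (hence a torus), and then uses an orbit--stabilizer argument to transfer this to $\bar L$. Your route is more elementary and gives the explicit dimension $m=\dim_{\mathbb{Q}}\operatorname{span}_{\mathbb{Q}}\{\alpha_j:j\in J\}$ for free; the paper's Lie-theoretic route avoids choosing coordinates on the orbit and packages the argument more abstractly. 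Both are standard and equally valid here.
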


\begin{proof}

Via a linear transformation, we can assume that $B$ consists of 2 by 2 blocks of skew symmetric matrices on its diagonal, and 0's elsewhere. Our trajectories are always of the form $\{ e^{Bt} x_0| t \geq 0 \}$ for some initial point $x_0$. Whenever we have two entries of $x_0$ equal to 0, and they both correspond to the same block, remove this block from $e^{Bt}$ (otherwise, these entries would simply remain 0 for the entire trajectory). Going forward we consider this reduced form of $e^{Bt}$.

Let $T$ be the set of matrices which consist of 2 by 2 rotation matrices on the diagonal, 1's elsewhere on the diagonal, and 0's off the diagonal (i.e., the same general structure as $e^{Bt}$), seen as a Lie subgroup of an appropriate $GL(k,\R)$.  It is easy to see that $T$ is a compact, connected, and commutative Lie group, and thus it is a torus (Theorem 11.2 in \cite{hall2003lie}).
Note that the closure of $G = \{e^{Bt} | t \in \mathbb{R} \}$, call it $\bar{G},$ is a subgroup (the closure of a subgroup is still a subgroup) of $T$. Since $\bar{G}$ is a closed subgroup of $T$, it must be compact and commutative. It is also a Lie subgroup of $T$ by the Closed Subgroup Theorem (see Theorem 20.12 in \cite{lee2012introduction}). Since $G$ is connected so is $\bar{G}$. Thus $\bar{G}$ is a compact, connected and commutative Lie subgroup of $T$ and therefore it must be a torus itself. 

Define $L = \{e^{Bt}x_0| t \in \mathbb{R}\}$. By Lemma \ref{lem:point in own omega} we have that $x_0$ is in $\omega(x_0)$ and since omega limit sets are backward and forward invariant we must also have that $L \subseteq \omega(x_0)$ and thus since omega limit sets are closed sets that $\bar{L} \subseteq \omega(x_0)$. Thus since $\omega(x_0) \subseteq \bar{L}$ we have that our omega limit set is in fact precisely $\bar{L}$.

Thus we can think of $\bar{G}$ as acting on $x_0$, and since the stabilizer is trivial we have $\bar{L}$ is diffeomorphic to $\bar{G}$ (see Theorem 21.18 in \cite{lee2012introduction}). Thus the omega limit set of $e^{Bt} x_0$ must also be a torus.
\end{proof}

We are now ready to prove our main result:

\begin{thm}
\label{thm:strictly convex describe omega limit sets}

Suppose we have a dynamical system $\dot{x} = f(x)$ which is nonexpansive for a strictly convex norm $\|.\|$. Suppose it has at least one bounded trajectory. Then all the trajectories are bounded, and their omega limit sets are that of some fixed conserved linear system $\dot{x} = Bx$. In particular, the omega limit sets are homeomoprhic to $(S^1)^k$ for some integer $k$.
\end{thm}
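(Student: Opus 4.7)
The plan is to assemble the structural lemmas already proved. First I would apply Proposition \ref{pro: unbounded or has fixed point} to the hypothesis ``at least one bounded trajectory'': the dichotomy immediately gives that every trajectory is bounded and that some equilibrium $p$ exists. After translating so that $p=0$, Corollary \ref{cor:equilibrium} says that every trajectory can be placed inside a sufficiently large closed norm ball $X$ that is compact and forward invariant, so the theory developed under the compact-state-space assumption applies to the study of any single trajectory.

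Next I would invoke the machinery already built on $X$: the global attractor $\gattract \subseteq X$ is nonempty, compact, convex, forward and backward invariant, and by Lemma~\ref{lem:one parameter isometry family} and the lemmas immediately following it, $\phi_t|_{\gattract}$ extends to a linear one-parameter group $F_t = e^{Bt}$ of isometries on $V = \mathrm{span}(\gattract)$. Since each $F_t$ is an isometry of $(V,\|\cdot\|)$, for every nonzero $y\in V$ the orbit $e^{Bt}y$ has constant norm, hence is bounded and does not converge to $0$. Lemma \ref{lem:linear ellipsoid trajectories} then yields that $B$ has only purely imaginary eigenvalues and is diagonalizable, i.e.\ $\dot x = Bx$ is a conserved linear system in the sense of the preceding lemmas. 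By Lemma~\ref{lem:describe linear system trajectories} its omega limit sets are homeomorphic to $(S^1)^k$; what remains is to match a nonlinear omega limit set to some linear one.

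The matching I would carry out as follows. Fix $x_0$ and any $y \in \omega_{\phi}(x_0)\subseteq \gattract$. One inclusion is free: $\omega_{\phi}(x_0)$ is closed and $\phi$-invariant, and on $\gattract$ the flows $\phi$ and $F$ coincide, so the $F$-orbit closure of $y$, which equals $\omega_{F}(y)$, lies in $\omega_{\phi}(x_0)$. For the reverse inclusion, given $z\in\omega_{\phi}(x_0)$, choose $t_n\to\infty$ with $\phi_{t_n}(x_0)\to y$ and $s_n\to\infty$ with $\phi_{s_n}(x_0)\to z$; after extracting a subsequence we may assume $s_n > t_n$ and $r_n := s_n - t_n \to\infty$. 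Nonexpansivity of $\phi_{r_n}$ gives
\[
\|\phi_{r_n}(y) - \phi_{s_n}(x_0)\| \;=\; \|\phi_{r_n}(y) - \phi_{r_n}(\phi_{t_n}(x_0))\| \;\le\; \|y - \phi_{t_n}(x_0)\| \;\to\; 0,
\]
so $\phi_{r_n}(y)\to z$ with $r_n\to\infty$, which means $z\in \omega_{\phi}(y) = \omega_{F}(y)$. Combining both inclusions yields $\omega_{\phi}(x_0) = \omega_{F}(y)$, and Lemma~\ref{lem:describe linear system trajectories} then identifies this set with $(S^1)^k$.

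The main obstacle is the reverse inclusion in the previous paragraph: this is the only place where the nonexpansive hypothesis is used in an essential way, in order to ``shadow'' the nonlinear forward trajectory by the linear one attached to a chosen recurrent point $y\in\gattract$. Everything else is bookkeeping, namely verifying that the hypotheses of Lemmas~\ref{lem:linear ellipsoid trajectories} and~\ref{lem:describe linear system trajectories} are met and that the reduction to a compact forward invariant ball around the equilibrium loses no generality.
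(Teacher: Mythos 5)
Your proposal is correct and follows essentially the same route as the paper: reduce to a compact ball around an equilibrium, pass to the global attractor $\gattract$ where the flow is a linear isometry group $e^{Bt}$, and invoke Lemma~\ref{lem:linear ellipsoid trajectories} and Lemma~\ref{lem:describe linear system trajectories} to get the torus structure. In fact your write-up is more complete than the paper's two-line proof: the nonexpansive ``shadowing'' argument showing $\omega_{\phi}(x_0)=\omega_{F}(y)$ for a point $y\in\omega_{\phi}(x_0)$ supplies exactly the step the paper leaves implicit, namely that the $\omega$-limit set of a trajectory not starting on $\gattract$ is the closure of a \emph{single} linear orbit rather than merely an invariant subset of $\gattract$.
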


\begin{proof}
    By Lemma \ref{lem:linear ellipsoid trajectories} we have that on the set ${\gattract}$ our dynamics must be equivalent (up to translation) to that of a linear system. By Lemma \ref{lem:describe linear system trajectories} we have that all the omega limit sets are tori.
\end{proof}

\subsection{Nonexpansive polyhedral norms}

We provide a self-contained proof that for \emph{(real-)analytic} vector fields which are nonexpansive with respect to a norm, we have a stronger convergence result. The following is essentially Theorem 21 from \cite{jafarpour2020weak}, but certain technical details were missing in the proof in that paper.

\begin{thm} \cite{jafarpour2020weak}
    Suppose we have a system $\dot{x} = f(x)$ where $f(x)$ is analytic and has bounded trajectories. Suppose the system is nonexpansive with respect to some polyhedral norm. Then the system converges to its equilibria set.
\end{thm}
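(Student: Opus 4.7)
My plan is to reduce to the compact state space setting via Corollary~\ref{cor:equilibrium} and then show, for an arbitrary initial point $x$, that every $y \in \omega(x)$ is an equilibrium. Since $\omega(x) \subseteq \gattract$ and the flow is an isometry on $\gattract$ by Corollary~\ref{lem:isometry on attractor}, for any $s>0$ the map $t \mapsto \|\phi_{t+s}(y) - \phi_t(y)\|$ is identically equal to the constant $L(s) := \|\phi_s(y) - y\|$ for all $t \in \mathbb{R}$. I would argue for contradiction: suppose $f(y) \neq 0$, so $L(s) > 0$ for some small $s>0$.

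The next step uses the polyhedral structure. A polyhedral norm can be written $\|z\| = \max_{i=1,\ldots,N} \ell_i(z)$ for finitely many linear functionals $\ell_i$ (the vertices of the dual polytope). Thus for each fixed $s$, the analytic functions $g_i^s(t) := \ell_i(\phi_{t+s}(y) - \phi_t(y))$ satisfy $\max_i g_i^s(t) = L(s)$ for all $t$. I would apply the identity theorem: since the finite union of closed sets $\{t : g_i^s(t) = L(s)\}$ covers $\mathbb{R}$, at least one of them is uncountable and so, by analyticity of $g_i^s$, identically equal to $L(s)$. Hence for each $s$ there is an index $j(s)$ with $g_{j(s)}^s(t) \equiv L(s)$. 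Since there are finitely many candidates for $j(s)$ but uncountably many relevant $s$, there is a single index $j^*$ and an open interval $I \subset (0,\infty)$ on which $\ell_{j^*}(\phi_{t+s}(y) - \phi_t(y)) = L(s)$ holds for all $t$ and all $s \in I$ (joint real-analyticity in $(t,s)$ promotes the equality from an uncountable set of $s$ to an interval).

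Differentiating in $t$ yields $\ell_{j^*}(f(\phi_{t+s}(y))) = \ell_{j^*}(f(\phi_t(y)))$ for all $t$ and $s \in I$, so the continuous function $t \mapsto \ell_{j^*}(f(\phi_t(y)))$ is periodic with every period drawn from the open interval $I$, and is therefore constant along the orbit of $y$. Differentiating the same identity in $s$ then shows that $L$ is affine on $I$ with slope $\ell_{j^*}(f(y))$. Iterating this argument over all pairs $(y', y'') \in \omega(x) \times \omega(x)$ produces a large family of linear constraints on the restriction of $f$ to the orbit of $y$. I would then invoke the structural fact that the isometry group of $(\mathbb{R}^n, \|\cdot\|)$ for polyhedral $\|\cdot\|$ is finite modulo translations: a continuous one-parameter group of isometries must therefore be either trivial or a nonzero translation subgroup, and the latter is ruled out by compactness of $\gattract$. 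Combined with the analytic constraints above, this forces $\phi_t$ to act trivially on $\omega(x)$, contradicting $f(y) \neq 0$.

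The hard part is the last step: transferring the ``finite isometry group'' intuition rigorously to the restricted flow on $\omega(x)$. One cannot directly apply a Mankiewicz-type extension, because $\omega(x)$ need not be a convex body in the polyhedral case (unlike the strictly convex setting handled earlier in the paper). I expect the cleanest way to close this gap is to use the active-face analysis above to show that the linear functionals that are extremal on the displacement $\phi_t(y) - y$ are stable along the trajectory, then to exploit that only finitely many such face configurations exist to conclude the one-parameter family $\{\phi_t\}$ factors through a finite group; handling the codimension changes when the argmax set jumps between facets is the technical wrinkle that, as the authors note, was missing from the original proof in~\cite{jafarpour2020weak}.
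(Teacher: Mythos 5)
Your first two steps are sound and take a genuinely different route from the paper's, but the proof as written is incomplete, and you say so yourself: the final passage from ``the isometry group of a polyhedral norm is finite modulo translations'' to ``$\phi_t$ acts trivially on $\omega(x)$'' is a real gap, not a technicality. That group-theoretic fact concerns affine isometries of all of $(\R^n,\|\cdot\|)$, and since $\omega(x)$ need not be a convex body you have no Mankiewicz-type extension available, so the restricted flow simply is not an element of that group; nothing you have written forces it to factor through a finite group. The good news is that you do not need this step at all: the analytic ingredients you already extracted close the argument. From $\ell_{j^*}(\phi_{t+s}(y)-\phi_t(y))=\ell_{j^*}(\phi_s(y)-y)$ for all $t$ and all $s$ (your joint-analyticity promotion in fact gives this on all of $\R^2$, not just on an interval $I$), the map $u(t):=\ell_{j^*}(\phi_t(y))$ satisfies Cauchy's functional equation $u(t+s)-u(t)=u(s)-u(0)$ and is continuous, hence $u(t)=u(0)+c't$ with $c'=\ell_{j^*}(f(y))$. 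Boundedness of the orbit forces $c'=0$, whence $L(s)=\ell_{j^*}(\phi_s(y)-y)=0$ for every $s$ in your uncountable set $S$; so $\phi_s(y)=y$ for uncountably many $s$, the set of periods of $y$ is a closed uncountable subgroup of $\R$, hence all of $\R$, and $y$ is an equilibrium. (One housekeeping item: to get $\|\phi_{t+s}(y)-\phi_t(y)\|=L(s)$ for negative $t$ you need that $\phi_{|t|}$ maps $\gattract$ isometrically onto itself, which follows from backward invariance of $\gattract$; this deserves a sentence.)

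For comparison, the paper takes a shorter path: nonexpansivity means the logarithmic norm of $\mathcal{J}_f$ is nonpositive, so $\|f(x(t))\|$ is nonincreasing along trajectories (it solves the variational equation $(d/dt)f(x(t))=\mathcal{J}_f(x(t))f(x(t))$); LaSalle then sends every solution to a set where $\|f(\phi_t(x_0))\|\equiv c$, and the same face-plus-analyticity argument you use --- applied to the velocity vector $f(x(t))$ sitting on the polyhedral sphere of radius $c$ rather than to finite-time displacements --- gives $\eta^\top f(x(t))\equiv\mathrm{const}\neq0$ for a facet normal $\eta$ when $c>0$, contradicting boundedness. Your version replaces the log-norm/LaSalle machinery with the isometry-on-$\gattract$ corollary already proved earlier in the paper, which is a reasonable trade; the face/pigeonhole/identity-theorem core is the same idea in both proofs, and the correct punchline in both is ``a nonzero constant linear functional of the velocity makes the trajectory unbounded,'' not a classification of the isometry group.
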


\begin{proof}

One can show that $\| f(x(t))\|$ is nonincreasing along any trajectory, 
because $(d/dt){f(x(t))} = \mathcal{J}_f(t) f(x(t))$ and the logarithmic norm of $\mathcal{J}_f(t)$ is nonpositive.
It follows by the LaSalle's Invariance Principle that every solution approaches a set $Z_c:= \{x_0 \,|\, \| f(\phi_t(x_0))\|\equiv c\}$ for some $c\geq0$.

We claim that any such set $Z_c$ consists solely of equilibria.
Pick any point $x_0\in Z$ and the corresponding trajectory $x(t)=\phi_t(x_0)$.
By definition of $Z_c$, $x(t)\in Z_c$ for all $t\geq0$.
We claim that $c=0$, i.e. that $x(t)\equiv x_0$, so that $x_0$ is an equilibrium.
Indeed, suppose that $c \not =0$.
Then $f(x(t))$ is always a point on a norm ball of a constant (nonzero) size. Thus it must spend a finite time on a face of this ball of constant size. Suppose that this face has normal vector $\eta$. Then $\eta \cdot f(x(t))$ will be a constant value, for a set of times in a set of positive measure, and so must be a constant value for all time, by analyticity. This implies that $\eta \cdot f(x(t))$ has this constant value for all $t\geq0$, forcing the velocity vector $f(x(t))$ to always point in a certain direction (i.e., along the direction of $\eta$), forcing the trajectory to be unbounded, a contradiction.
\end{proof}

\subsection{Nonexpansive maps on \texorpdfstring{$\mathbb{R}^2$}{Lg}}

In the special case of $\mathbb{R}^2$, we have some stronger results. In the following, we do not assume the norm on $\mathbb{R}^2$ is strictly convex.

\begin{lem}
\label{lem:l^2 norms preserve 1 param}
    The only norms preserved by a nontrivial one parameter family of linear isometries of the form $e^{Bt}$ are the weighted $l^2$ norms.
\end{lem}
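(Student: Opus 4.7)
The plan is to exploit the strong constraint that a nontrivial one-parameter group of linear isometries imposes in only two dimensions. The argument splits into three stages: extract the normal form of $B$, upgrade the isometry family to the full rotation group in suitable coordinates, and then use rotational invariance to force the norm to come from an inner product.

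First I would pin down the structure of $B$. Because $\|e^{Bt}x\|=\|x\|$ for all $t$, every orbit of $\dot y=By$ sits on the norm sphere through $x$ and is therefore bounded; for $x\neq 0$ it does not converge to the origin either. Applying Lemma~\ref{lem:linear ellipsoid trajectories}, $B$ must be diagonalizable over $\mathbb{C}$ with purely imaginary eigenvalues. Nontriviality of the family means $B\neq 0$, so in $\mathbb{R}^2$ the eigenvalues must be $\pm i\alpha$ for some $\alpha>0$. Consequently there is a real invertible $P$ with
\[
P^{-1}BP \;=\; \begin{pmatrix} 0 & -\alpha \\ \alpha & 0 \end{pmatrix},
\]
so that $P^{-1}e^{Bt}P$ is just the planar rotation by angle $\alpha t$.

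Next I would pass to the pulled-back norm $\|y\|' := \|Py\|$ on $\mathbb{R}^2$ and note that each such rotation $R_{\alpha t}$ is a $\|\cdot\|'$-isometry. Since $\alpha\neq 0$, the image of $t\mapsto \alpha t$ modulo $2\pi$ is all of $\mathbb{R}/2\pi\mathbb{Z}$, so in fact \emph{every} rotation of the plane preserves $\|\cdot\|'$. A norm invariant under the full rotation group must be constant on each Euclidean circle centered at $0$ (rotations act transitively on such circles), so by positive homogeneity $\|\cdot\|' = c\,\|\cdot\|_2$ for some $c>0$.

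Finally, pulling back to the original coordinates gives $\|x\| = \|P^{-1}x\|' = c\,\|P^{-1}x\|_2$, so $\|x\|^2 = x^\top Q x$ with $Q := c^2 P^{-\top}P^{-1}$ symmetric positive definite. Diagonalizing $Q$ by an orthogonal change of basis then exhibits $\|\cdot\|$ as a weighted $\ell^2$ norm. The main obstacle is packed into the first step, namely that bounded orbits of a nonzero planar linear system force the skew-symmetric normal form, but this is exactly what Lemma~\ref{lem:linear ellipsoid trajectories} already supplies; once it is invoked, the rotational characterization of Euclidean norms on the plane finishes the job.
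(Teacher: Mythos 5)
Your proof is correct, and while it starts from the same place as the paper's (boundedness of the orbits of $e^{Bt}$ forces $B$ to be diagonalizable with purely imaginary eigenvalues, hence conjugate to a skew-symmetric rotation generator in $\mathbb{R}^2$), it finishes by a different and more complete route. The paper invokes Lemma~\ref{lem:conserved P form} to produce a quadratic form $x^\top P x$ conserved by the flow and then simply asserts that ``this preserved norm is unique up to multiplication by a scalar''; that assertion is exactly the nontrivial point, since a priori some non-quadratic norm could also be invariant under the one-parameter group. Your argument supplies the missing uniqueness step: after conjugating so that $P^{-1}e^{Bt}P$ is rotation by $\alpha t$ with $\alpha\neq 0$, the map $t\mapsto \alpha t$ surjects onto $\mathbb{R}/2\pi\mathbb{Z}$, so the pulled-back norm is invariant under the \emph{entire} rotation group, and transitivity of rotations on Euclidean circles plus homogeneity forces it to be a multiple of $\|\cdot\|_2$; pushing forward through $P$ yields $\|x\|^2=x^\top Q x$ with $Q=c^2P^{-\top}P^{-1}\succ 0$. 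The two approaches are really two presentations of the same underlying fact (your $Q$ is, up to scale, the paper's $P=L^\top L$), but yours buys an explicit proof that no other invariant norm exists, whereas the paper's buys brevity at the cost of leaving that step implicit. Your application of Lemma~\ref{lem:linear ellipsoid trajectories} is also properly justified, since every orbit of an isometric linear flow stays on the norm sphere of its initial condition and hence is bounded and, for $x\neq 0$, bounded away from the origin.
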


\begin{proof}
We consider all possible bounded trajectories of the form $e^{Bt}x_0$. This corresponds to trajectories of the linear system $\dot{x} = Bx$. Note that all the eigenvalues of $B$ must have 0 real part, otherwise we would have points converging to 0 or diverging to $\infty$, contradicting that $e^{Bt}$ should be an isometry for all $t$.

Note that by Lemma \ref{lem:conserved P form} that there exists a matrix $P$ such that $\frac{d (x^\top P x)}{dt} = 0$. Note that $x^\top Px = 1$ is the unit ball of this norm which is preserved by the vector field $\dot{x} = Bx$, and so this preserved norm is unique up to multiplication by a scalar.
\end{proof}

\begin{lem}
\label{lem:no limit cycles}
    If a global attractor ${\gattract}$ contains a limit cycle, the only norm we can preserve on ${\gattract}$ is a weighted $l^2$ norm.
\end{lem}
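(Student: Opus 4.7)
My plan is to use the limit cycle to produce a nontrivial continuous one-parameter group of linear isometries of $(\R^2,\|\cdot\|)$, and then invoke Lemma \ref{lem:l^2 norms preserve 1 param} to conclude that $\|\cdot\|$ must be a weighted $\ell^2$ norm.

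First I would set up the planar geometry around the limit cycle $\gamma\subseteq\gattract$. By Jordan--Schoenflies, $\gamma$ bounds a compact topological disk $D$, and because each flow map $\phi_t$ is a homeomorphism of $\R^2$ fixing $\gamma$ setwise (and fixing the point at infinity in the one-point compactification), $\phi_t(D)=D$ for every $t\ge 0$. The proof of Lemma \ref{lem:fixed point} applies to the compact, forward-invariant, contractible set $D$ (Brouwer's theorem only needs the fixed-point property, not convexity), producing an equilibrium of $f$ in $D$; since points of $\gamma$ carry nonzero velocity, this equilibrium must lie in $\inter(D)$. I translate so that it is the origin, and enlarge the state space to any norm ball $B_{0,R}\supseteq D$, which is forward invariant by Corollary \ref{cor:equilibrium}. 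Then $D=\phi_t(D)\subseteq\phi_t(B_{0,R})$ for every $t\ge 0$, so $D\subseteq\gattract$, and by Corollary \ref{lem:isometry on attractor} each $\phi_t$ is an isometry on $D$ with $\phi_t(0)=0$.

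Next I would apply Mankiewicz's theorem to each restriction $\phi_t\big|_{\inter(D)}\colon\inter(D)\to\inter(D)$: the domain is open and connected, the image is open (it equals the domain), and the map is an isometry, so the theorem extends it uniquely to an affine isometry $F_t\colon\R^2\to\R^2$. Because $0\in\inter(D)$ and $\phi_t(0)=0$, each $F_t$ is linear. The semigroup identity $F_{t+s}=F_tF_s$ follows from the corresponding identity for $\phi_t$ on $\inter(D)$ together with uniqueness of the Mankiewicz extension, and continuity of $t\mapsto F_t$ follows (as in Lemma \ref{lem:Ft continous}) from continuity of $\phi_t$ at two vectors in $\inter(D)$ spanning $\R^2$. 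The family is nontrivial because $\gamma$ is a genuine periodic orbit, so $F_t$ acts nontrivially on points of $\gamma$.

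By Theorem 2.14 in \cite{hall2003lie}, $F_t=e^{Bt}$ for some nonzero $B$, and each $F_t$ preserves $\|\cdot\|$; Lemma \ref{lem:l^2 norms preserve 1 param} then forces $\|\cdot\|$ to be a weighted $\ell^2$ norm. The main obstacle I anticipate is the passage from ``$\phi_t$ is an isometry only on $\gattract$'' to ``$F_t$ is a linear isometry of all of $\R^2$'': what makes the limit-cycle hypothesis essential is that it supplies a full two-dimensional piece of $\gattract$ (namely the disk $D$), giving Mankiewicz's theorem the open, connected domain it needs to deliver an extension to all of $\R^2$. Without such a full-dimensional piece one would be extending an isometry from a lower-dimensional set, where no corresponding rigidity result is available.
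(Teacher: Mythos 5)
Your proof is correct and follows essentially the same route as the paper's: take the limit cycle together with its interior as a compact invariant two-dimensional set, conclude $\phi_t$ is an isometry there via Corollary~\ref{lem:isometry on attractor}, extend via Mankiewicz's theorem to an affine (then linear) isometry $F_t$ of $\R^2$, and invoke Lemma~\ref{lem:l^2 norms preserve 1 param}. You supply several details the paper leaves implicit --- the Jordan--Schoenflies argument that $\phi_t(D)=D$, the interior equilibrium used to make $F_t$ linear, and the group structure, continuity, and nontriviality of $t\mapsto F_t$ --- all of which are correct and genuinely needed.
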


\begin{proof}
    Suppose we have a limit cycle, and let $I$ be the limit cycle with its interior. Then since $\phi_t$ takes $I$ to itself for all time, so $\phi_t$ must be an isometry on $I$ by Lemma \ref{lem:isometry on attractor}. It contains an open set so by Mankiewicz's Theorem the map $\phi_t$ restricted to the interior of $I$ can be extended to an affine map $F_t$. Thus by Lemma \ref{lem:l^2 norms preserve 1 param} the preserved norm must be a weighted $l^2$ norm.
\end{proof}

\begin{lem}
\label{lem:converge to fixed point}
    If an $\omega$ limit set of a point $p$ contains an equilibrium point, then $p$ converges to that limit point.
\end{lem}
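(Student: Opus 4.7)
The plan is very short: exploit nonexpansivity together with the fact that an equilibrium is a fixed point of every flow map. Let $q$ be an equilibrium in $\omega(p)$, so that $\phi_s(q)=q$ for all $s\geq 0$. First I would observe that the function $t\mapsto \|\phi_t(p)-q\|$ is monotonically nonincreasing: for any $s,t\geq 0$,
\[
\|\phi_{t+s}(p)-q\| \;=\; \|\phi_s(\phi_t(p))-\phi_s(q)\| \;\leq\; \|\phi_t(p)-q\|,
\]
by the defining inequality of a nonexpansive flow. Being nonincreasing and bounded below by $0$, this function has a limit $L\geq 0$ as $t\to\infty$.

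Next I would use that $q\in\omega(p)$ to pin down $L$. By definition of the omega limit set, there exists a sequence $t_n\to\infty$ with $\phi_{t_n}(p)\to q$, hence $\|\phi_{t_n}(p)-q\|\to 0$. Since the whole function $t\mapsto\|\phi_t(p)-q\|$ converges to $L$, the subsequential limit forces $L=0$. Therefore $\phi_t(p)\to q$ as $t\to\infty$, which is exactly the claim.

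There is no genuine obstacle here; the only thing to be careful about is that we need an actual equilibrium (so $\phi_s(q)=q$ for \emph{all} $s$, which is what lets us apply nonexpansivity against the constant trajectory at $q$). Strict convexity of the norm, compactness of the state space, and the special structure of $\mathbb{R}^2$ are not used in this argument; the lemma is a direct consequence of nonexpansivity alone.
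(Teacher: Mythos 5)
Your proof is correct and follows essentially the same route as the paper's: both arguments use nonexpansivity against the constant trajectory at the equilibrium $q$ to show that $\|\phi_t(p)-q\|$ cannot increase, and then use the defining property of $\omega(p)$ to drive this distance to zero. Your version is slightly more carefully phrased (explicit monotone limit plus subsequence argument), but there is no substantive difference.
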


\begin{proof}
    If $p$ is an equilibrium point in the $\omega$ limit set of a point $x$, then for every $\epsilon > 0$ we can find $t > 0$ such that $\| p - \phi_t(x) \| < \epsilon$. Since the system under consideration is nonexpansive, we have that $\| p - \phi_t(x) \| < \epsilon$ for all $t > T$. Thus the trajectory is simply converging to $p$.
\end{proof}

\begin{lem}
    If a system is nonexpansive for a norm which is not a weighted $l^2$ norm, then all bounded trajectories must converge to the equilibria set.
\end{lem}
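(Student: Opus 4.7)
The plan is to reduce to the setting of bounded trajectories on a compact forward-invariant state space, and then invoke the Poincaré--Bendixson theorem together with the two preceding lemmas to rule out the only nontrivial possibility.

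First, I would fix a norm $\|\cdot\|$ that is not a weighted $\ell^2$ norm, and suppose the system has at least one bounded trajectory. By Proposition~\ref{pro: unbounded or has fixed point} there then exists an equilibrium $x^*$, and by Corollary~\ref{cor:equilibrium} every closed norm ball about $x^*$ is compact and forward invariant. Since the conclusion concerns a single bounded trajectory starting at some $x_0$, I would pass to a norm ball large enough to contain $x_0$ and work inside this compact forward-invariant $X\subseteq \mathbb{R}^2$, so that the $\omega$-limit set $\omega(x_0)$ is nonempty, compact, connected, and invariant.

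Next, in $\mathbb{R}^2$ the Poincaré--Bendixson theorem asserts that any such $\omega$-limit set either contains an equilibrium or is a periodic orbit of the system. If $\omega(x_0)$ contains an equilibrium $p$, then Lemma~\ref{lem:converge to fixed point} immediately gives $\phi_t(x_0)\to p$, so $x_0$ converges to the equilibria set. If instead $\omega(x_0)$ is a periodic orbit $\gamma$, then $\gamma$ is contained in the global attractor $\gattract$ (since every $\omega$-limit set is), so $\gattract$ contains a limit cycle. But Lemma~\ref{lem:no limit cycles} then forces the norm to be a weighted $\ell^2$ norm, contradicting our hypothesis on $\|\cdot\|$. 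Hence this case cannot occur, and the trajectory from $x_0$ must converge to an equilibrium.

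The main subtlety to be careful with is the applicability of Poincaré--Bendixson: it requires a $C^1$ vector field with a bounded forward semitrajectory in $\mathbb{R}^2$, which we have by construction. A minor point is that the standard Poincaré--Bendixson dichotomy assumes either isolated equilibria or uses the weaker formulation ``$\omega(x_0)$ is a periodic orbit or contains an equilibrium,'' and only the weaker form is needed here, so no extra assumption on the equilibrium set is required. With these checks in place, combining Poincaré--Bendixson with Lemmas~\ref{lem:no limit cycles} and~\ref{lem:converge to fixed point} completes the proof.
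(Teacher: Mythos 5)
Your proposal is correct and follows essentially the same route as the paper: Poincar\'e--Bendixson gives the dichotomy ``periodic orbit or $\omega$-limit set containing an equilibrium,'' the periodic-orbit case is excluded via Lemma~\ref{lem:no limit cycles} because the norm is not a weighted $\ell^2$ norm, and the remaining case is handled by Lemma~\ref{lem:converge to fixed point}. Your version is in fact a bit more careful than the paper's, spelling out the reduction to a compact forward-invariant ball and the fact that the periodic orbit lies in the global attractor before invoking Lemma~\ref{lem:no limit cycles}.
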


\begin{proof}

    Suppose the system is nonexpansive for a norm which is not a weighted $l^2$ norm. By Lemma \ref{lem:no limit cycles} the system cannot have any limit cycles. By the Poincare-Bendixson theorem any $\omega$ limit set that is not a limit cycle must contain a fixed point, but by Lemma \ref{lem:converge to fixed point} the fixed point it the only fixed point and we must converge to it.

\end{proof}

\section{A necessary and sufficient condition for nonexpansivity}

Here we will provide a necessary and sufficient description of nonexpansivity with respect to a norm. This condition is connected to the supporting hyperplanes of a unit ball of said norm. This can be seen as a type of Demidovich condition for contractivity \cite{9799744}.

Let $B_d = \{x \in \mathbb{R}^n | \|x\| \leq d\}$. For all $v \in \mathbb{R}^n$ let $N_v$ be the set of all possible normal vectors of hyperplanes that support $B_{\|v\|}$ at $v$ and are orientated toward the complement of $B_{\| v\|}$. In the following let $\mathbb{X} = \mathbb{R}^n$. 

\begin{thm}
\label{thm:nonexpansive demidovich}
    Suppose we have a dynamical system $\dot{x} = f(x)$ and a norm $\|.\|$ on the state space $\mathbb{X}$ of the system. Then the system is nonexpansive iff for all $x \in \mathbb{X}$ and all $v \in \mathbb{R}^n$ then whenever $n \in N_v$ we must have
    \[
    n^\top \mathcal{J}_f(x) v \leq 0.
    \]
\end{thm}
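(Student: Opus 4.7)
My plan is to reformulate nonexpansivity of the flow in terms of the variational equation and then apply convex analysis to the norm. First, I would observe that the condition $\|\phi_t(x)-\phi_t(y)\|\leq\|x-y\|$ for all $x,y$ and all $t\geq 0$ is equivalent to the statement that for every $x_0$ and every $v$, the solution $w(t):=D\phi_t(x_0)\,v$ of the variational equation $\dot w = \mathcal{J}_f(\phi_t(x_0))\,w$, $w(0)=v$, satisfies $\|w(t)\|\leq\|v\|$ for all $t\geq 0$. The forward direction follows by taking $y=x_0+\epsilon v$ and letting $\epsilon\to 0$; the reverse follows from a mean-value estimate along the line segment from $x$ to $y$, using the operator-norm bound $\|D\phi_t(x)\|_{\mathrm{op}}\leq 1$. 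So the theorem reduces to showing that $\|w(t)\|$ is nonincreasing along every variational trajectory if and only if the Demidovich-type inequality $n^\top \mathcal{J}_f(x)v\leq 0$ holds for all $x$, $v$, and $n\in N_v$.

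Next, since $\|w(t)\|$ is absolutely continuous, it is nonincreasing on $[0,\infty)$ if and only if its upper right Dini derivative is pointwise nonpositive. Using $w(t+h)=w(t)+h\dot w(t)+o(h)$ together with Lipschitz continuity of the norm, the standard convex-analysis formula for the one-sided directional derivative of the norm gives
\[
D^{+}\|w(t)\| \;=\; \max_{\eta\,\in\,\partial\|\cdot\|(w(t))}\; \eta^\top\, \mathcal{J}_f(\phi_t(x_0))\,w(t),
\]
where $\partial\|\cdot\|(w)$ denotes the convex subdifferential of the norm at $w$. For $w\neq 0$, a vector $\eta$ lies in $\partial\|\cdot\|(w)$ precisely when $\{u:\eta^\top u=\eta^\top w\}$ is an outward-oriented supporting hyperplane of $B_{\|w\|}$ at $w$ and $\eta$ has unit dual norm; in other words, the unit-dual-norm elements of $N_w$ coincide exactly with $\partial\|\cdot\|(w)$. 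Because the inequality $n^\top \mathcal{J}_f(x)v\leq 0$ is homogeneous of degree one in $n$, requiring it for all $n\in N_v$ is equivalent to requiring it for the unit-dual-norm normals alone.

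With these ingredients in hand, the proof becomes a direct translation. For the forward direction, nonexpansivity forces $D^{+}\|w(t)\|\leq 0$ at $t=0$, which by the identification above gives $\eta^\top \mathcal{J}_f(x_0)\,v\leq 0$ for every unit-dual-norm $\eta\in N_v$; arbitrariness of $(x_0,v)$ and homogeneity in $\eta$ complete the step. For the reverse direction, the hypothesis yields $D^{+}\|w(t)\|\leq 0$ at every $t$ along every variational trajectory, so $\|w(t)\|$ is nonincreasing by the standard Dini-derivative criterion, which gives $\|D\phi_t(x_0)\|_{\mathrm{op}}\leq 1$ and hence nonexpansivity of $\phi_t$.

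I expect the main technical obstacle to be the identification of the right derivative of $t\mapsto\|w(t)\|$ with the one-sided directional derivative of the (non-smooth) norm at $w(t)$ in the direction $\dot w(t)$, together with the convex-analytic characterization of that directional derivative as a maximum of linear functionals over the supporting normals $N_{w(t)}$. Everything else is routine bookkeeping between the operator-level statement of nonexpansivity and the pointwise infinitesimal condition.
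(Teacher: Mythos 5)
Your proof is correct, but it follows a genuinely different route from the paper's. The paper works directly with finite differences of trajectories: for the forward direction it applies the supporting-hyperplane inequality to $\phi_t(x)-\phi_t(y)\in B_{\|x-y\|}$, Taylor-expands in $t$ to get $n^\top(f(x)-f(y))\leq 0$, and then uses the mean value theorem plus a rescaling of $x$ toward $y$ along the segment to isolate the Jacobian; for the converse it argues informally that $n^\top(\dot x-\dot y)\leq 0$ prevents $x(t)-y(t)$ from leaving the ball $B_{\|x-y\|}$. You instead pass to the variational equation and reduce everything to the scalar differential inequality $D^{+}\|w(t)\|\leq 0$, using the convex subdifferential of the norm to identify $D^{+}\|w\|$ with $\max_{\eta}\eta^\top\dot w$ over the unit-dual-norm elements of $N_{w}$. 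Each approach has its merits: the paper's forward direction avoids invoking differentiability of the flow in the initial condition and stays at the level of the vector field, while your treatment of the converse is actually the more rigorous of the two --- the paper's ``not moving outside the ball'' step is precisely the Dini-derivative monotonicity criterion you cite explicitly, and your subdifferential identification (outward supporting normals of $B_{\|v\|}$ at $v$, normalized in the dual norm, coincide with $\partial\|\cdot\|(v)$, with homogeneity handling the rest of $N_v$) is exactly the bookkeeping needed to make that step airtight. Your reduction from the operator bound $\|D\phi_t(x)\|_{\mathrm{op}}\leq 1$ back to nonexpansivity via integration along the segment is valid here because the state space is all of $\mathbb{R}^n$, hence convex.
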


\begin{proof}
    Suppose that the system is nonexpansive. Then for all $t \geq 0$ we have that $\| \phi_t(x) - \phi_t(y) \| \leq \|x - y\|$. Thus we must have that for $n \in N_{x-y}$ that
    \[
    n^\top(\phi_t(x) - \phi_t(y)) \leq n^\top (x - y)
    \]
    or
    \[
    n^\top((\phi_t(x) - x) - (\phi_t(y) - y)) \leq 0.
    \]
    Note the first inequality is due to the observation that if $n$ is the normal vector of a supporting hyperplane $H$ of a convex figure, and for $v \in H$ we have $n^{\top} v = c$, then for all points in the convex figure we must have $n^{\top} v \leq c$. 
    
    Now we have that $\phi_t(x) - x = t f(x) + t \epsilon_1(t)$ where $\epsilon_1(t) \rightarrow 0$ as $t \rightarrow 0$, and similarly $\phi_t(y) - y = t f(y) + t \epsilon_2(t)$ where $\epsilon_2(t) \rightarrow 0$ as $t \rightarrow 0$. Thus we get  
    \begin{align*}
    & n^\top(t f(x) + t \epsilon_1(t) - (t f(y) + t \epsilon_2(t)) \\ 
    &= n^\top(t (f(x) - f(y)) + t (\epsilon_1(t)-\epsilon_2(t))) \leq 0
    \end{align*}
    or
    \[
    n^\top( (f(x) - f(y)) + (\epsilon_1(t)-\epsilon_2(t))) \leq 0.
    \]
    Let $t \rightarrow 0$ we get that $n^\top(f(x) - f(y)) \leq 0$. Now we have by the mean value theorem that
    \begin{align*}
    & n^{\top}(f(x) - f(y)) = \int_0^1 n^{\top}(\mathcal{J}_f(y + (x-y)t) (x-y)) dt \\
    &= n^{\top}(\mathcal{J}_f(y + (x-y)s) (x-y)).
    \end{align*}
    Thus also have that
    \begin{align*}
        n^\top(f(x) - f(y))  &\leq 0 \\
        n^\top(\mathcal{J}_f(y + (x-y)s) (x-y) &\leq 0 
    \end{align*}

    Let $x_r = y + (x-y)r$, so that $x_1 = x$ and $x_r-y = (x-y)r$. We can divide by $r$ to get the inequality
    \[
        n^\top(\mathcal{J}_f(y + (x-y)rs) (x-y) \leq 0
    \]
    Letting $r \rightarrow 0$ we get that $(x-y)rs \rightarrow 0$ and so we have that 
    \[
    n^\top \mathcal{J}_f(y) (x-y) \leq 0.
    \]

    This is the desired condition. 
    Now we will prove the other direction. Again note that 
    \begin{align*}
    &n^\top(f(x) - f(y)) = \int_0^1 n^\top(\mathcal{J}_f(y + (x-y)t) (x-y)) dt \\ 
    &= n^\top(\mathcal{J}_f(y + (x-y)s) (x-y)).
    \end{align*}
    Thus we have that
    \begin{align*}
    &n^\top(f(x) - f(y)) = n^\top(\mathcal{J}_f(y + (x-y)s) (x-y)) \leq 0.
    \end{align*}
    The last inequality is by assumption. Thus $n^\top(\dot{x}-\dot{y}) = n^\top(f(x) - f(y)) \leq 0$. From this it follows that the vector $x(t)-y(t)$ is not moving outside of the ball $B_{\|x-y\|}$ and so the system is nonexpansive. 
\end{proof}

\subsection{Examples}
\subsubsection{Systems nonexpansive with respect to the \texorpdfstring{$l^4$}{Lg} norm}
 Using Theorem \ref{thm:nonexpansive demidovich} we can show that there exists systems with nonexpansive $l^p$ norms for $p \neq 1,2, \infty$. For the $l^4$ norm in $\mathbb{R}^2$ the condition from Theorem \ref{thm:nonexpansive demidovich} is that 
 \[
 [u^3,v^3] \mathcal{J}_f(x) \begin{bmatrix} u \\ v \end{bmatrix} \leq 0.
 \] 
  Note that
\begin{align*} 
&-(u^2 + 2cuv -2c^2 v^2)^2 = -u^4 - 4 c u^3 v + 8c^3 u v^3 - 4c^4 v^4 \\
&= [u^3,v^3]\begin{bmatrix}
    -1 & -4c \\ 
    8c^3 & -4c^4 
\end{bmatrix} \begin{bmatrix}
    u \\
    v
\end{bmatrix} \leq 0.
\end{align*}
This implies that for the matrix 
$$A_c =\begin{bmatrix}
    -1 & -4c \\ 
    8c^3 & -4c^4 
\end{bmatrix}.$$ 
that the linear system $\dot{x} = A_cx$ in nonexpansive with respect to the $l^4$ norm for all real numbers $c$. Note also that for $u = (1 + \sqrt{3})cv$ we have that
\[
[u^3,v^3]\begin{bmatrix}
    -1 & -4c \\ 
    8c^3 & -4c^4 
\end{bmatrix} \begin{bmatrix}
    u \\
    v
\end{bmatrix} = 0.
\]

\subsubsection{A globally convergent Hurwitz everywhere system which is not contractive with respect to any norm}

 We can also show that in fact there is a Hurwitz everywhere system that is globally convergent which is not nonexpansive with respect to any norm. Consider the system
 \begin{align*}
     \dot{x} &= -x \\
     \dot{y} &= -(x^2+1)y
 \end{align*}

Now by Theorem \ref{thm:nonexpansive demidovich} for the system to be nonexpansive with respect to a norm we have that 
\[
n^\top \mathcal{J}_f(x,y) v \leq 0
\]
as in the notation of the theorem. Note that for the system under consideration that 
\[
\mathcal{J}_f(x,y) = \begin{bmatrix} -1 & 0 \\ -2xy & -(x^2 + 1) \end{bmatrix}.
\] 
For $v$ with nonzero $x$ coordinate we have that $\mathcal{J}_f(x,y) v$ contains every vector with a negative $x$ coordinate. This forces $n$ to be the vector $[-1,0]$ or some positive multiple of this vector. There does not exist a bounded symmetric convex shape centered at the origin in $\mathbb{R}^2$ such that any supporting hyperplanes at points with nonzero $x$ coordinate have normal $[-1,0]$ (the only such shape with this property would be two parallel lines).

\section{Conclusions}

We characterized the $\omega$-limit sets of (generally nonlinear) nonexpansive dynamical systems with respect to strictly convex norms as attractors of linear systems. A common theme throughout our paper is that the isometry group of a norm is closely tied to the behavior of dynamical systems nonexpansive with respect to the norm. We also provided a complete description of nonexpansive systems in $\mathbb{R}^2$, and presented a Demidovich type condition which we used to provide some examples of nonexpansive systems.

\bibliographystyle{IEEEtran}
\bibliography{nonexpansive_ieee}

\end{document}